\documentclass [twoside,reqno, 12pt] {amsart}

\usepackage{amsmath, amssymb, amsthm, hyperref, graphicx, setspace, mathrsfs, amsfonts}
\usepackage[a4paper, margin=1in]{geometry}  

\onehalfspacing  

\newtheorem{theorem}{Theorem}[section]

\newtheorem{proposition}[theorem]{Proposition}

\theoremstyle{definition}

\newtheorem{remark}[theorem]{Remark}


\title[]{\small \textbf{Anisotropic Calder\'{o}n Problem for a Non-Local Second Order Elliptic Operator}}

\author[ ]{\small  Susovan Pramanik }
\date{}

\address{Susovan Pramanik, Harish-Chandra Research Institute, A CI of Homi Bhabha National Institute, Chhatnag Road, Jhunsi, Allahabad 211 019, India}
\email{susovanpramanik@hri.res.in}

\begin{document}

	\begin{abstract}
		
		\noindent
		This paper investigates the anisotropic Calder\'{o}n problem for a non-local elliptic operator of order 2, on closed Riemannian manifolds. 
		We demonstrate that using the Cauchy data set, we can recover the geometry of a closed Riemannian manifold up to standard gauge.  
	\end{abstract}
	
	\maketitle

	\section{Introduction}

	The anisotropic Calder\'{o}n problem is geometric in nature (cf. \cite{LU89, Sal13}). Let us state the well-known open problem in the context of a closed, connected Riemannian manifold $(M, g)$ with $\dim(M) > 2$. Let $\mathcal{O} \subset M$ be a non-empty open set. The Calder\'{o}n problem examines whether knowledge of the Cauchy data set 
	\begin{align*}
		\mathcal{C}^\mathcal{O}_{M, g}= \{(u|_\mathcal{O}, (-\Delta_g)u|_\mathcal{O}) \mid u \in C^\infty(M), \, -\Delta_g u = 0 \,\text{in} \,\, M\setminus \overline{\mathcal{O}}\}
	\end{align*}
	determines the isometry class of the manifold $(M, g)$. \((-\Delta_g) \) represents the positive Laplace-Beltrami operator on \( (M, g)\). 
	
	\medskip
	When $\dim(M)= 2$, the problem is solved using an additional gauge due to conformal invariance of the Laplace-Beltrami operator. See some of the references \cite{Nach96, KL06, Buk08, GT11}. The problem for real-analytic manifolds in dimensions three and above has been addressed (see the references \cite{LU89,  LU01, LTU03}), but it is still open for smooth manifolds. Positive results have been observed in transversally anisotropic geometries, see \cite{DKSU09, DKLS16}. Starting from these celebrated works \cite{SU87, KSU07}, for current research on the Calder\'{o}n problem, we refer to the survey article \cite{Uhl14}.
	
	\medskip
	The purpose of this work is to explore the anisotropic Calder\'{o}n problem for a second order elliptic psedodifferential operator $((-\Delta_g)^2+m^2\mathbb{I})^\frac{1}{2}$ for some fixed $m\in \mathbb{R}\setminus \{0\}$, on smooth, closed, connected Riemannian manifolds.
	
	\medskip
	Let $(M, g)$ be a closed, connected Riemannian manifold with dimension \(\dim(M) \geq 2 \). We define \(-\Delta_g \) as the positive definite, self-adjoint Laplace-Beltrami operator on \(L^2(M) \) with domain $\mathcal{D}(-\Delta_g) = H^2(M);$ \cite[pp. 25]{Tay23}.
	
	The spectrum of \( (-\Delta_g) \) over $(M, g)$ consists of a discrete sequence of eigenvalues  
	\[
	0 = \lambda_0 < \lambda_1 \leq \lambda_2 \leq \cdots \to+\infty,
	\]  
	where each eigenvalue \( \lambda_k \) has a finite multiplicity \( d_k \).
	
	\medskip
	Define \(\pi_k: L^2(M) \to \ker(-\Delta_g - \lambda_k) \) as the orthogonal projection onto the eigenspace corresponding to \(\lambda_k \). Then, for any function \(f \in L^2(M) \), 
	\[
	\pi_k(f) = \sum_{j=1}^{d_k} \langle f, \phi_{k_j} \rangle _{L^2(M)} \phi_{k_j},
	\]  
	where, \( \{\phi_{k_j} \}_{j=1}^{d_k} \) forms an orthonormal basis for \( \ker(-\Delta_g - \lambda_k) \).  
	
	\medskip
	Set $m\in \mathbb{R}\setminus\{0\}$. We consider the positively perturbed bi-Laplace operator on $(M,g)$ as:
	\[ \mathcal{L}_g:= (-\Delta_g)^2 + m^2\mathbb{I}. \]  
	This is an unbounded operator on \(L^2(M) \), with the domain of definition $\mathcal{D}(\mathcal{L}_g) = H^4(M)$, see \cite{Tay81}.  
	
	\medskip
	We now define the operator of our interest, the $\frac{1}{2}$-fractional power of \(\mathcal{L}_g \), through its action over its domain of definition $\mathcal{D}(\mathcal{L}_g ^{\frac{1}{2}})=H^2(M)$ (cf. \cite{Tay81}), as
	\begin{equation}\begin{aligned}
			\mathcal{L}_g^{\frac{1}{2}} u &= \left( (-\Delta_g)^2 + m^2 \right)^{\frac{1}{2}} u,\quad u\in H^2(M) \\
			&= \sum_{k=0}^\infty (\lambda_k^2 + m^2)^{\frac{1}{2}} \pi_k (u).
	\end{aligned}\end{equation}
	We note that the operator $\mathcal{L}_g ^{\frac{1}{2}}$ is an elliptic pseudodifferential with order $2$. For related discussion, see Section \ref{sym-ord}. 
	
	\medskip
	Because all eigenvalues of \(\mathcal{L}_g \) are strictly positive, the operator \(\mathcal{L}_g ^{\frac{1}{2}} \) is invertible over the range space $\mathcal{R}(\mathcal{L}_g)=L^2(M)$, and its inverse is given by  
	\begin{equation}\begin{aligned}\label{L-inv}
			\mathcal{L}_g^{-\frac{1}{2}} f &= \left( (-\Delta_g)^2 + m^2 \right)^{-\frac{1}{2}} f,\quad f\in L^2(M) \\
			&= \sum_{k=0}^\infty (\lambda_k^2 + m^2)^{-\frac{1}{2}} \pi_k (f).
	\end{aligned}\end{equation}
	Let \( \mathcal{O} \subset M \) be a non-empty open subset of \( M \).  For each \( f \in C_0^\infty(\mathcal{O}) \), the equation  
	\begin{equation}
		\mathcal{L}^{\frac{1}{2}}_g u = f \quad \mbox{in }  M \label{eq:1.1}
	\end{equation}  
	has a unique solution \( u = u_f \in C^\infty(M) \cap L^2(M)\). See \cite{FGKU25}. The solution is formally expressed as $u= \mathcal{L}_g ^{-\frac{1}{2}}f$ (cf. \eqref{L-inv}). 
	
	\medskip
	Now we state our main result.
	\begin{theorem}\label{th1.1}
		Let $(M_j, g_j)$ be two smooth, closed, connected Riemannian manifolds, with $M_1 \cap M_2 \neq \emptyset$. Consider $\mathcal{O}_j \subset M_j, \, j = 1, 2$ to be two non-empty open sets such that $(\mathcal{O}_1, g_1) = (\mathcal{O}_2, g_2) =: (\mathcal{O}, g)$. Assume that \[ \mathcal{L}_{g_1} ^{-\frac{1}{2}} f|_\mathcal{O} = \mathcal{L}_{g_2} ^{-\frac{1}{2}} f|_\mathcal{O}, \quad \forall f \in C^\infty_0(\mathcal{O} ). \]
		Then, there exists a diffeomorphism $\Phi: M_1 \to M_2$ such that $\Phi^\ast g_2 = g_1$.
	\end{theorem}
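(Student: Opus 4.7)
The plan is to translate the local Cauchy-data hypothesis into an equality of local spectral data on $\mathcal{O}$, and then invoke a Gel'fand/Belishev--Kurylev type reconstruction theorem. First I would observe that, since $\mathcal{L}_g^{-1/2}$ is an elliptic pseudodifferential operator of order $-2$, its Schwartz kernel $K_{g_j}(x,y)$ is smooth off the diagonal, so the hypothesis is equivalent to the pointwise identity
\[
\sum_{k=0}^\infty \bigl( (\lambda_k^{(1)})^2 + m^2 \bigr)^{-\frac{1}{2}} P_k^{(1)}(x,y) = \sum_{k=0}^\infty \bigl( (\lambda_k^{(2)})^2 + m^2 \bigr)^{-\frac{1}{2}} P_k^{(2)}(x,y),
\]
valid for $(x,y) \in \mathcal{O}\times\mathcal{O}$ with $x\neq y$; here $P_k^{(j)}(x,y) = \sum_{l=1}^{d_k^{(j)}} \phi_{k_l}^{(j)}(x)\phi_{k_l}^{(j)}(y)$ is the kernel of the orthogonal projection onto $\ker(-\Delta_{g_j} - \lambda_k^{(j)})$.

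Second, and this is the heart of the matter, I would extract the individual spectral pairs from this single identity. The strategy is to exploit the resolvent representation
\[
\mathcal{L}_g^{-1/2} = \frac{1}{\pi i}\int_m^\infty \frac{1}{\sqrt{\mu^2 - m^2}}\bigl[R_g(i\mu) - R_g(-i\mu)\bigr]\,d\mu, \qquad R_g(\zeta) := ((-\Delta_g) - \zeta)^{-1},
\]
coupled with a non-local unique continuation / Runge approximation principle for the order-two pseudodifferential operator $\mathcal{L}_g^{1/2}$ on a closed manifold, in the spirit of the Ghosh--Salo--Uhlmann construction for the fractional Laplacian. Together with analytic continuation in $\zeta$, this should upgrade the kernel identity to an equality
\[
G_{g_1}(x,y;\zeta) = G_{g_2}(x,y;\zeta), \qquad (x,y) \in \mathcal{O}\times\mathcal{O},
\]
of the Laplacian Green's functions for all complex $\zeta$ outside the joint spectrum. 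The poles and residues of $\zeta\mapsto G_{g_j}(x,y;\zeta)$ then yield $\{\lambda_k^{(1)}\} = \{\lambda_k^{(2)}\}$, together with matching restrictions $\phi_{k_l}^{(1)}|_{\mathcal{O}} = \phi_{k_l}^{(2)}|_{\mathcal{O}}$ after a suitable choice of orthonormal bases in each eigenspace.

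Finally, with matching local spectral data on $\mathcal{O}$, I would invoke a Belishev--Kurylev-type reconstruction theorem for closed Riemannian manifolds equipped with interior spectral data on an open subset, producing the desired diffeomorphism $\Phi : M_1 \to M_2$ with $\Phi|_{\mathcal{O}} = \mathrm{id}_{\mathcal{O}}$ and $\Phi^\ast g_2 = g_1$.

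The principal obstacle lies in the spectral extraction step: a fixed value of $m$ furnishes only one functional transform of the data $\{(\lambda_k, P_k)\}$, so separating the eigenvalues and their associated projection kernels from this single transform is genuinely subtle. Formulating an appropriate non-local unique continuation / Runge principle on a \emph{closed} manifold---where no natural exterior region plays the role of the complement in the Euclidean fractional Calder\'{o}n setting---together with the justification of analytic continuation in $\zeta$, is expected to account for the bulk of the technical work.
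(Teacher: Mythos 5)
Your proposal takes a genuinely different route from the paper, but it has a real gap precisely at the step you flag as the ``principal obstacle,'' and the paper contains the idea you are missing.

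The paper's argument does not try to extract resolvents, Green's functions, or individual spectral pairs $(\lambda_k,\phi_k)$ from the single kernel identity. Instead it exploits the following simple but crucial observation: since $g_1=g_2=g$ on $\mathcal{O}$ and $C^\infty_0(\omega_1)$ (with $\omega_1\subset\subset\mathcal{O}$) is invariant under $\Delta_g$, one has $\mathcal{L}_{g_1}^k f=\mathcal{L}_{g_2}^k f=\mathcal{L}_g^k f$ for every $k\geq 0$ and every $f\in C^\infty_0(\omega_1)$. Feeding $\mathcal{L}_g^k f$ back into the hypothesis produces not one but \emph{infinitely many} constraints, and rewriting these through the heat-semigroup representation $\mathcal{L}_{g_i}^{-1/2}=\Gamma(\tfrac12)^{-1}\int_0^\infty t^{-1/2}e^{-t\mathcal{L}_{g_i}}\,dt$ and integrating by parts in $t$ converts them into vanishing of all polynomial moments $\int_0^\infty \phi(s)s^k\,ds=0$ of the difference $\phi(s)=s^{-1/2}\bigl(e^{-\tfrac1s\mathcal{L}_{g_1}}-e^{-\tfrac1s\mathcal{L}_{g_2}}\bigr)f(x)$, for $x$ in a disjoint open set $\omega_2$. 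A Paley--Wiener/Hardy-space argument then yields $\phi\equiv 0$, i.e.\ full equality of the semigroups $e^{-t\mathcal{L}_{g_1}}f=e^{-t\mathcal{L}_{g_2}}f$ on $\omega_2$ for all $t>0$. From there the paper does not pass through spectral data at all: it cancels the factor $e^{-m^2t}$, uses Kannai's transmutation twice (bi-Laplacian heat semigroup $\to$ $\sin(\sigma\Delta_g)$ $\to$ wave group $\to$ ordinary heat semigroup), applies parabolic unique continuation to extend the equality from $\omega_1\times\omega_2$ to all of $\mathcal{O}\times\mathcal{O}$, and then invokes the heat-kernel determination theorem of Feizmohammadi--Ghosh--Krupchyk--Uhlmann. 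That last theorem is indeed proved via a reduction to the hyperbolic inverse problem, so your Belishev--Kurylev endpoint is the right target, but the paper reaches it by a heat-kernel reduction rather than by reconstructing local spectral data directly.

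Concretely, the gap in your sketch is this: a single fixed $m$ gives one Stieltjes-type transform of $\{(\lambda_k,P_k)\}$, and analytic continuation of the resolvent integrand in $\zeta$ does not by itself let you undo that superposition; the ``non-local Runge/UCP for $\mathcal{L}_g^{1/2}$ on a closed manifold'' that you invoke is neither formulated nor established, and it is unclear that any such statement would isolate $R_{g}(\zeta)$ at a single $\zeta$ from the $\mu$-integral. The missing device is the generation of an infinite family of identities via $f\mapsto \mathcal{L}_g^k f$, which turns the qualitative ``one transform is not enough'' problem into an explicit moment problem that can be solved. If you insert that device into your framework, you essentially recover the paper's Proposition~\ref{p1} and the rest follows; without it, the spectral-extraction step remains an unproved claim on which the whole proposal rests.
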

	\begin{remark}
		In recent years, the research of non-local inverse problems has been quite dynamic. Starting from the article \cite{GSU20}, without being exhaustive we mention some
		references \cite{GLX17, RS20, RS2020, GRSU20, MLR20, Cov20, Li20, HL20, BGU21, KLW22, CGR22, Gho22, Zim23, HU24, HLW24, Das25}. However, the operator's order (cf. Section \ref{sym-ord}) in consideration falls within $(0, 2)$. This study extends that limit to $2$ using a straightforward example of the square root of a positively perturbed bi-Laplace operator. 
		
		The fractional power of the perturbed Laplace operator is known as the relativistic Schr\"{o}dinger operator $((-\Delta_g) + m^2\mathbb{I})^\alpha$, $\alpha\in (0,1)$, and has its due significance in mathematical physics. We refer to the work of V. Ambrosio \cite{Amb22, Amb23} on this account. 
	\end{remark}
	\begin{remark}
		Let $m\in (0, \infty)$, and we introduce another operator of interest, 
		\[ \mathcal{A}_g:= \mathcal{L}_g ^{\frac{1}{2}} - m\mathbb{I}.\]
		The definition is: 
		\[ \mathcal{A}_g u = \sum_{k=0}^\infty \left((\lambda_k^2 + m^2)^{\frac{1}{2}} - m \right) \pi_k u, \]  
		where \(u \in \mathcal{D}(\mathcal{A}_g)=H^2(M) \), and $\{(\lambda_k, \pi_k)\}_{k=0}^\infty$ as previously. 
		
		\medskip
		Let \(u\in C^\infty(M)\cap L^2(M) \) be the unique solution of the equation $$\mathcal{A}_gu=0\mbox{ in }M$$ with satisfying $\langle u, 1\rangle_{L^2(M)}=0$ for \( f \in C_0^\infty(\mathcal{O}) \). See \cite{FKU24}.
		
		\medskip
		Next, we define the corresponding Cauchy data set: 
		\[ \mathcal{C}^{\mathcal{O}}_{M, g}= \{(u|_\mathcal{O},  \mathcal{A}_g u|_\mathcal{O}) \mid u \in C^\infty(M), \,\,\, \mathcal{A}_g u = 0 \,\text{in} \, M \setminus \overline{\mathcal{O}} \}. \]  
		This dataset contains solutions to the equation \(\mathcal{A}_g u = 0 \) and serves as the basis on the anisotropic Calder\'{o}n problem for the operator $\mathcal{A}_g$.
		\begin{theorem}
			Let \( (M_i, g_i), \, i = 1,2 \) be two closed, connected Riemannian manifolds with \( \dim(M_i) \geq 2 \). Assume that \(\mathcal{O} \subset M_1 \cap M_2 \) is a non-empty open subset where the metrics agree, i.e., $(\mathcal{O}, g_1) = (\mathcal{O}, g_2):= (\mathcal{O}, g)$. The equality of the Cauchy data set over $(\mathcal{O}, g)$, i.e. $\mathcal{C}^\mathcal{O}_{M_1, g_1} = \mathcal{C}^\mathcal{O}_{M_2, g_2}$, implies the existence of a diffeomorphism $\Phi: M_1 \to M_2$ such that $\Phi^\ast g_2 = g_1$.
		\end{theorem}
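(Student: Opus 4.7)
The plan is to reduce the Cauchy-data formulation for $\mathcal{A}_g$ to the source-to-solution formulation for $\mathcal{L}_g^{-1/2}$, so that Theorem \ref{th1.1} can be invoked directly. The decisive structural fact is that $\mathcal{L}_g^{1/2} = \mathcal{A}_g + m\mathbb{I}$, whence both operators share the same spectral projections $\pi_k$ of $-\Delta_g$ and their eigenvalues are in explicit bijection via $\mu_k := (\lambda_k^2+m^2)^{1/2} - m \longleftrightarrow \mu_k + m = (\lambda_k^2+m^2)^{1/2}$.

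First I would translate the hypothesis $\mathcal{C}^\mathcal{O}_{M_1, g_1} = \mathcal{C}^\mathcal{O}_{M_2, g_2}$ into equality of source-to-solution maps for $\mathcal{A}_g$ on $\mathcal{O}$. For $f \in C_0^\infty(\mathcal{O})$, let $u_f^{(j)}$ be the unique mean-zero solution of $\mathcal{A}_{g_j} u = f$ on $M_j$; since $\supp f \subset \mathcal{O}$, the pair $(u_f^{(j)}|_\mathcal{O}, f|_\mathcal{O})$ lies in $\mathcal{C}^\mathcal{O}_{M_j, g_j}$. Comparing the two Cauchy data sets and using that $\ker \mathcal{A}_g = \mathbb{R}\cdot 1$ together with the normalization $\langle u_f^{(j)}, 1\rangle_{L^2(M_j)} = 0$ then forces $u_f^{(1)}|_\mathcal{O} = u_f^{(2)}|_\mathcal{O}$ for every such $f$. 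Equivalently, the Schwartz kernels of $\mathcal{A}_{g_j}^{-1}$ on the orthogonal complement of constants agree on $\mathcal{O} \times \mathcal{O}$.

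Second, from the common kernel
\[
K_\mathcal{A}(x,y) = \sum_{k\geq 1} \mu_k^{-1} \sum_{j=1}^{d_k} \phi_{k_j}(x)\phi_{k_j}(y), \qquad x,y \in \mathcal{O},
\]
I would extract the spectral data $\{(\mu_k, \{\phi_{k_j}|_\mathcal{O}\}_{j})\}_{k\geq 1}$ up to an orthogonal change of basis within each eigenspace. This is feasible because the sequence $\mu_k$ is discrete and tends to infinity, and the restrictions $\phi_{k_j}|_\mathcal{O}$ are linearly independent across distinct eigenspaces by unique continuation for $-\Delta_g$; a meromorphic/resolvent argument in a complex spectral parameter then isolates each residue. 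Applying the spectral shift $\mu_k \mapsto \mu_k + m$, which leaves the projections unchanged, reconstructs the Schwartz kernel of $\mathcal{L}_g^{-1/2}$ on $\mathcal{O}\times\mathcal{O}$ from the $\mathcal{A}_g$-data. Performed on both manifolds, this produces identical kernels for $\mathcal{L}_{g_1}^{-1/2}$ and $\mathcal{L}_{g_2}^{-1/2}$ on $\mathcal{O}$, and Theorem \ref{th1.1} then yields the diffeomorphism $\Phi: M_1 \to M_2$ with $\Phi^\ast g_2 = g_1$.

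The principal obstacle I expect lies in the spectral extraction step: cleanly decoupling the contributions of distinct eigenvalue clusters from a single restricted kernel $K_\mathcal{A}|_{\mathcal{O}\times\mathcal{O}}$ requires a careful meromorphic analysis, and the linear-independence input hinges on unique continuation for eigenfunctions of $-\Delta_g$ from the open set $\mathcal{O}$---both of which, while standard in the field, are the technical heart of the argument.
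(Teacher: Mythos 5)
The paper does not actually prove this theorem; it states only that the argument is ``quite similar to Theorem~\ref{th1.1},'' so the intended route is a re-run of the heat-semigroup/moment/complex-analysis/transmutation machinery of Section~\ref{sec3}, presumably using the subordination of $\mathcal{A}_g^{-1}$ to $e^{-t\mathcal{L}_g}$ and the fact that $\mathcal{A}_g(\mathcal{A}_g+2m\mathbb{I})=\Delta_g^2$ is a \emph{local} operator, so that one again obtains a whole family of moment identities $\int_0^\infty \phi(s)\,s^k\,ds=0$. Your proposal instead tries to reduce to Theorem~\ref{th1.1} by a direct spectral reconstruction, and this is where it breaks.

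The decisive gap is the ``spectral extraction'' step: you assert that from the single restricted kernel
\[
K_\mathcal{A}(x,y)=\sum_{k\ge 1}\mu_k^{-1}\sum_{j=1}^{d_k}\phi_{k_j}(x)\phi_{k_j}(y),\qquad x,y\in\mathcal{O},
\]
one can recover the full list $\{(\mu_k,\{\phi_{k_j}|_\mathcal{O}\}_j)\}_{k\ge 1}$ (up to rotation within eigenspaces), citing a ``meromorphic/resolvent argument in a complex spectral parameter.'' But the Cauchy data only hands you this one fixed number $z=0$ of the would-be resolvent $\sum_k(z-\mu_k)^{-1}\pi_k$; there is no complex parameter to vary and hence no poles/residues to isolate. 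Moreover, since $\mathcal{A}_g^{-1}$ is nonlocal, knowing $\mathcal{A}_g^{-1}f|_\mathcal{O}$ for $f$ supported in $\mathcal{O}$ does \emph{not} give you the kernels of the iterates $\mathcal{A}_g^{-2},\mathcal{A}_g^{-3},\dots$ on $\mathcal{O}\times\mathcal{O}$, so you cannot manufacture a parameterized family either. Notice that if this extraction were possible, you would not need Theorem~\ref{th1.1} at all: you could reconstruct $\mathcal{K}_{\Delta_g}(t,x,y)=\sum_k e^{-t\lambda_k}\pi_k(x,y)$ on $(0,\infty)\times\mathcal{O}\times\mathcal{O}$ directly and invoke \cite[Theorem~1.5]{FGKU25}, thereby bypassing all of Propositions~\ref{p1}--\ref{p5}. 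The fact that the paper (and \cite{FGKU25}) takes the much longer moment/Paley--Wiener/transmutation route is strong evidence that a one-kernel spectral inversion is not available in this generality. (There is also a smaller normalization issue in your first step: the Cauchy data set does not impose $\langle u,1\rangle=0$, so equality of data sets a priori yields $\mathcal{A}_{g_1}^{-1}f|_\mathcal{O}=\mathcal{A}_{g_2}^{-1}f|_\mathcal{O}+c(f)$, and you must argue that the constant $c(f)$ is harmless.) To repair the argument you should replace the spectral extraction by the paper's moment scheme: use the local operator $\Delta_g^2=\mathcal{A}_g(\mathcal{A}_g+2m\mathbb{I})$ to generate the test sources $\Delta_g^{2k}f$ supported in $\omega_1$, express $\mathcal{A}_g^{-1}$ through the heat semigroup $e^{-t\mathcal{L}_g}$ by subordination, and then run Propositions~\ref{p1}--\ref{p5} with these modifications.
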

		
		\noindent
		The proof is quite similar to Theorem \ref{th1.1}. 
	\end{remark}
	
	\medskip
	This paper's results stem from the work \cite{GU21} to solve the nonlocal Calder\'{o}n's problem of recovering anisotropic medium or the metric. Subsequent development adds \cite{CGRU23, Feiz24, CO24, FGKU25, FKU24, FGKRSU25, CR25} to further complete the picture in this nonlocal regime. In particular, we now know that, 
	the knowledge of the Cauchy data set 
	\begin{align*}
		\mathcal{C}^{\alpha, \mathcal{O}}_{M, g}= \{(u|_\mathcal{O}, (-\Delta_g)^\alpha u|_\mathcal{O}) \mid u \in C^\infty(M), \, (-\Delta_g)^\alpha u = 0 \,\text{in} \,\, M\setminus \overline{\mathcal{O}}, \,\, \alpha\in (0,1)\}
	\end{align*}
	determines the manifold $(M, g)$ uniquely up to a diffeomorphism. This applies to both closed and open manifolds, see \cite{FGKU25, FGKRSU25}. This is based on reducing the problem into the hyperbolic inverse problem, and thanks to the work of \cite{HLOS18} for instance, we have a definite conclusion in the nonlocal analogue of the anisotropic Calder\'{o}n problem, which is still to be discovered for the local anisotropic Calder\'{o}n problem.

	In general, these nonlocal problems are motivated by many models, such as diffusion process \cite{AMRT10}, image processing \cite{GO08}, stochastic process \cite{BV16} and so on.

	\medskip
	The paper is organized as follows. In Section \ref{sec2}, we introduce the fractional power of the elliptic operator using the semigroup approach. Further, we emphasize the symbol and order of these non-local operators.  Section \ref{sec3} is devoted to the proof of our main result.
	
	\section{Preliminary understandings.}\label{sec2}
	\subsection{Heat semigroup approach in defining the fractional power of $\mathcal{L}_g$}
	Here, we define the fractional power of $\mathcal{L}_g = ((-\Delta_g)^2 + m^2\mathbb{I})$ using the semigroup technique, which is equivalent to the prior definition.

	\medskip
	Let \(e^{-t\mathcal{L}_g} \) denote the associated heat semigroup on \(L^2(M)\). Let $v\in L^2(M)$, we define \begin{equation}\label{hk1} e^{-t \mathcal{L}_g } v(x) = \int_M \mathcal{K}_{\mathcal{L}_g }(t,x, y) v(y) \, dV_g(y), 
	\end{equation}
	$\mathcal{K}_{\mathcal{L}_g }(t,x,y)$ is the heat kernel for the heat semigroup $e^{-t\mathcal{L}_g }$, which is defined as 
	\begin{equation} \begin{aligned}
			\mathcal{K}_{\mathcal{L}_g }(t,x,y) = \sum_{k=0}^\infty e^{-t(\lambda_k^2 +m^2)} \,\phi_k(x)\,\phi_k(y)&=e^{-m^2t} \sum_{k=0}^\infty e^{-t\lambda_k^2 } \,\phi_k(x)\,\phi_k(y)\\ &=e^{-m^2t} \mathcal{K}_{\Delta_g^2}(t,x,y),\label{2.1} \end{aligned}\end{equation} 
	where, $K_{\Delta^2_g}(t,x,y)$ be the heat kernel for the heat semigroup $e^{-t\Delta^2_g}$.
	
	\medskip
	Fei He \cite{Fei22} provides a pointwise estimate of the heat kernel $\mathcal{K}_{\Delta_g^2}(t,x,y)$.
	\begin{theorem}[\cite{Fei22}]\label{th2.1}
		Let $x, y$ be two points on an arbitrary smooth connected compact Riemannian manifold $M$, and let $t \in (0,\infty)$. Then 
		\begin{equation}
			\left | \mathcal{K}_{\Delta_g^2}(t, x, y) \right | \leq \frac{C}{t^{n/4}} e^{ -\frac{c\,d_g (x, y)^\frac{4}{3}}{t^\frac{1}{3}}  },
		\end{equation}
		where $C>0$, and \( d_g(x, y) \) is the Riemannian distance between \( x \) and \( y \).
	\end{theorem}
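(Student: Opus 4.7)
The proof, following E.~B.~Davies' method of exponential weights adapted to higher-order parabolic operators, proceeds by first obtaining an on-diagonal estimate and then extracting off-diagonal decay via a conjugated semigroup, with optimization over the weight parameter producing the exponent $4/3$.

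First I would obtain the prefactor $t^{-n/4}$ by a standard ultracontractivity argument for the fourth-order heat semigroup. By spectral calculus $e^{-t\Delta_g^2}$ is an $L^2$-contraction, and for $u(t) = e^{-t\Delta_g^2}v$ the energy identity reads $\tfrac{d}{dt}\|u\|_{L^2}^2 = -2\|\Delta_g u\|_{L^2}^2$. Combined with the Sobolev/Nash-type inequality $\|u\|_{L^2}^{2+8/n} \leq C\|\Delta_g u\|_{L^2}^2\,\|u\|_{L^1}^{8/n}$ valid on the closed manifold $M$ (this is the $L^1 \to L^2$ Nash inequality one gets from iterating the $H^2$ embedding), a Moser-type differential inequality yields $\|e^{-t\Delta_g^2}\|_{L^1 \to L^2} \leq C t^{-n/8}$, and duality together with semigroup composition gives $\|e^{-t\Delta_g^2}\|_{L^1 \to L^\infty} \leq C t^{-n/4}$, i.e. the on-diagonal bound $|\mathcal{K}_{\Delta_g^2}(t,x,y)| \leq C t^{-n/4}$.

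Next I would introduce the twisted operator. For $\phi \in C^\infty(M)$ with $\|\nabla \phi\|_\infty \leq 1$ and $\alpha \in \mathbb{R}$, set $\mathcal{P}_\alpha := e^{-\alpha\phi}\Delta_g^2 e^{\alpha\phi}$. Expanding the commutators produces
\begin{equation*}
\mathcal{P}_\alpha = \Delta_g^2 + \alpha R_1 + \alpha^2 R_2 + \alpha^3 R_3 + \alpha^4 R_4,
\end{equation*}
where each $R_k$ is a differential operator of order $4-k$ whose coefficients are controlled by derivatives of $\phi$ up to order $4-k$ together with curvature terms from $M$. A careful quadratic-form estimate, using Cauchy--Schwarz and Young's inequality to absorb the lower-order perturbations into $\tfrac{1}{2}\|\Delta_g u\|_{L^2}^2$, gives
\begin{equation*}
\langle \mathcal{P}_\alpha u, u\rangle_{L^2} \geq \tfrac{1}{2}\|\Delta_g u\|_{L^2}^2 - C\alpha^4 \|u\|_{L^2}^2,
\end{equation*}
so that the twisted semigroup satisfies $\|e^{-t\mathcal{P}_\alpha}\|_{L^2 \to L^2} \leq e^{C\alpha^4 t}$. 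Repeating the Moser iteration of the previous step with $\mathcal{P}_\alpha$ in place of $\Delta_g^2$, where the same $\alpha^4$ correction enters each Nash step as a bounded additive perturbation, then upgrades this to
\begin{equation*}
\|e^{-t\mathcal{P}_\alpha}\|_{L^1 \to L^\infty} \leq C\, t^{-n/4}\, e^{C\alpha^4 t}.
\end{equation*}
Since the kernel of $e^{-t\mathcal{P}_\alpha}$ is $e^{-\alpha\phi(x)}\mathcal{K}_{\Delta_g^2}(t,x,y)e^{\alpha\phi(y)}$, this reads
\begin{equation*}
|\mathcal{K}_{\Delta_g^2}(t,x,y)| \leq C\, t^{-n/4}\, e^{C\alpha^4 t}\, e^{\alpha(\phi(x)-\phi(y))}.
\end{equation*}
Choosing $\phi$ as a $C^\infty$ mollification of $z \mapsto -d_g(z,y)$ with $\|\nabla \phi\|_\infty \leq 1$ makes $\phi(x)-\phi(y) \leq -d_g(x,y) + \varepsilon$, and optimizing $C\alpha^4 t - \alpha d_g(x,y)$ in $\alpha > 0$ via $\alpha \sim (d_g(x,y)/t)^{1/3}$ produces the claimed factor $e^{-c d_g(x,y)^{4/3}/t^{1/3}}$.

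The main obstacle is controlling the twisted operator $\mathcal{P}_\alpha$ on a curved manifold: the iterated commutators $[\Delta_g, \phi]$ generate terms involving $\Hess \phi$, $|\nabla \phi|^2$, $\Delta_g\phi$, and curvature contractions, and it must be verified that only the top-order coefficient $|\nabla \phi|^4$ contributes the $\alpha^4$ growth while all the intermediate terms genuinely absorb into the $\|\Delta_g u\|_{L^2}^2$ bilinear form. A second subtlety is that $d_g(\cdot, y)$ is only Lipschitz; one mollifies on a scale $\delta$, checks that the error $\varepsilon(\delta)$ in $\phi(x)-\phi(y) \leq -d_g(x,y) + \varepsilon(\delta)$ can be absorbed into the constants uniformly, and uses compactness of $M$ to handle large $t$ and points beyond the injectivity radius.
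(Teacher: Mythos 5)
The paper does not prove this statement: Theorem \ref{th2.1} is quoted verbatim from Fei He's paper \cite{Fei22} and used as a black box, so there is no in-paper proof to compare against. That said, the approach you sketch---Davies's method of twisted semigroups $e^{-\alpha\phi}\Delta_g^2 e^{\alpha\phi}$, on-diagonal ultracontractivity via a Nash-type inequality for the $H^2$-form, and optimization of $C\alpha^4 t - \alpha\, d_g(x,y)$ in $\alpha$ to produce $\exp(-c\,d_g(x,y)^{4/3}/t^{1/3})$---is the standard route in the literature on heat-kernel bounds for higher-order elliptic operators (Davies, Barbatis--Davies, Auscher et al.), and the exponent $4/3 = 2m/(2m-1)$ with $m=2$ comes out of exactly the optimization you describe.

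There is, however, a genuine gap in the ultracontractivity step. Your Moser/Nash iteration implicitly uses that the semigroup is $L^1$-contractive (or at least $L^1$-bounded uniformly in $t$), which is how the $\|u\|_{L^1}$ factor in the Nash inequality stays constant as you run the differential inequality $\tfrac{d}{dt}\|u\|_{L^2}^2 = -2\|\Delta_g u\|_{L^2}^2$. For second-order operators this is automatic because the heat kernel is positive and has unit mass. For the bi-Laplacian the kernel is \emph{not} positive---$\int_M \mathcal{K}_{\Delta_g^2}(t,\cdot,y)\,dV_g = 1$ gives only a signed mass identity, not $\|e^{-t\Delta_g^2}\|_{L^1\to L^1}\le 1$. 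So as written, the Nash step does not close. On a compact manifold you can bypass this for the untwisted operator by a direct spectral argument: $\|e^{-t\Delta_g^2}\|_{L^2\to L^\infty}^2 = \sup_x \sum_k e^{-2t\lambda_k^2}\phi_k(x)^2 \lesssim t^{-n/4}$ via the local Weyl law, and then $L^1\to L^\infty$ follows from self-adjointness and the semigroup property. But the twisted operator $\mathcal{P}_\alpha$ is not self-adjoint, so the same spectral shortcut is unavailable there, and you must either run a Moser iteration adapted to non-Markovian higher-order semigroups (the technique of Davies's 1995--97 $L^p$ spectral theory papers, which tracks $L^p$-norms without assuming positivity) or interpolate using both the twisted $L^2$-bound and the untwisted $L^1\to L^\infty$ bound. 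Also, the twisted quadratic-form estimate should be stated for $\operatorname{Re}\langle\mathcal{P}_\alpha u, u\rangle$, since $\mathcal{P}_\alpha$ has a nontrivial imaginary part coming from the odd-order commutator terms. With these two repairs the argument becomes the standard higher-order Davies proof and would deliver the claimed bound.
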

	
	\noindent
	According to \cite{Fei22}, $\int_M \mathcal{K}_{\Delta_g^2}(t,x,y) \,\,dV_g =1$, yields  $\int_M \mathcal{K}_{\mathcal{L}_g }(t,x,y) \,\,dV_g \leq 1$, showing that $\{e^{-t \mathcal{L}_g}\}_{t \geq 0}$ is a submarkovian semigroup. Using Theorem \ref{th2.1}, we obtain the following estimate:
	\begin{equation}
		\left | \mathcal{K}_{\mathcal{L}_g }(t,x,y) \right| \leq \frac{C}{t^{n/4}} \,e^{-m^2t}\,e^{ -\frac{c\,d_g (x, y)^\frac{4}{3}}{t^\frac{1}{3}}},\quad \forall (t, x, y)\in (0,\infty)\times M\times M.\label{2.2}
	\end{equation}
	
	\subsection*{Euclidean case in $(\mathbb{R}^n, e)$:}
	The heat kernel for the bi-Laplace heat equation has the self-similar form:
	\[ \mathcal{K}_{\Delta_e^2}(t, x, y)=\frac{1}{t^{\frac{n}{4}}} \, e^{-c_n\frac{|x - y|^{\frac{4}{3}}}{t^{\frac{1}{3}}}}, \quad\forall (t, x, y)\in (0,\infty)\times \mathbb{R}^n \times \mathbb{R}^n,\]
	where $c_n>0$ is some constant.

	\medskip
	So for the operator $(-\Delta_e)^2 + m^2\mathbb{I}$ in \( (\mathbb{R}^n, e) \), we have 
	$$e^{-t((-\Delta_e)^2 + m^2\mathbb{I})} v(x) = e^{-m^2t} \,\frac{1}{t^{\frac{n}{4}}} \int_{\mathbb{R}^n} e^{-c\frac{|x - y|^{\frac{4}{3}}}{t^{\frac{1}{3}}}} v(y) \, dy,  $$
	for $v\in L^2(\mathbb{R}^n)$.
	
	\medskip
	This can be seen that, 
	\begin{align*}
		e^{-t((-\Delta_e)^2 + m^2\mathbb{I})} v(x) 
		&=  e^{-m^2t}\int_{\mathbb{R}^n} \,\frac{1}{t^{\frac{n}{4}}} R\left(\frac{x - y}{t^{\frac{1}{4}}}\right) v(y) \, dy,
		\quad \text{where, } R(x) = e^{-c|x|^{\frac{4}{3}}} \\[3pt]
		\Rightarrow \quad \left| e^{-t ((-\Delta_e)^2 + m^2\mathbb{I})} v(x) \right| &\leq e^{-m^2t} \, \Big|\Big|\frac{1}{t^{\frac{n}{4}}} R\left(\frac{x - y}{t^{\frac{1}{4}}}\right) \Big|\Big|_{L^2(\mathbb{R}_y^n)} \, \|v\|_{L^2(\mathbb{R}^n_y)}\\[3pt]
		&\quad= e^{-m^2t} \, \|R\|_{L^2(\mathbb{R}^n)} \, \|v\|_{L^2(\mathbb{R}^n)}.
	\end{align*}
	
	\medskip
	Next, we define the fractional power of $((-\Delta_e)^2 + m^2\mathbb{I})$ in \( (\mathbb{R}^n, e) \). Consider  \( \alpha \in (0,1) \) and  $v \in L^2(\mathbb{R}^n)$.  We first define
	\begin{equation*}
		((-\Delta_e)^2 + m^2\mathbb{I})^{-\alpha} v = \frac{1}{\Gamma(\alpha)} \int_0^{\infty} t^{\alpha - 1} e^{-t((-\Delta_e)^2 + m^2\mathbb{I})} v \, dt.
	\end{equation*}
	This is pointwise well-defined, since
	\begin{align*}
		\left| ((-\Delta_e)^2 + m^2\mathbb{I})^{-\alpha} v(x) \right| 
		&= \left| \frac{1}{\Gamma\left(\alpha\right)} \int_0^\infty t^{-\frac{1}{2}} e^{-t ((-\Delta_e)^2 + m^2\mathbb{I})} v(x) \, dt \right| \\
		&\leq \frac{1}{\Gamma\left(\alpha\right)}\|R\|_{L^2(\mathbb{R}^n)} \, \|v\|_{L^2(\mathbb{R}^n)}\, \int_0^\infty e^{-m^2t} \frac{dt}{t^{1-\alpha}} \\
		&\quad= \frac{1}{\Gamma\left(\alpha\right)}\|R\|_{L^2(\mathbb{R}^n)} \, \|v\|_{L^2(\mathbb{R}^n)} \, \frac{\Gamma\left(\alpha\right)}{|m|^{2\alpha}}\\
		&\quad= \, \frac{\|R\|_{L^2(\mathbb{R}^n)} \, \|v\|_{L^2(\mathbb{R}^n)}}{|m|^{2\alpha}} 
		< \infty, \quad m\in \mathbb{R}\setminus\{0\}.
	\end{align*}
	
	\noindent
	In other words, we have the mapping property
	\[  ((-\Delta_e)^2 + m^2\mathbb{I})^{-\alpha}: L^2(\mathbb{R}^n)\mapsto L^\infty(\mathbb{R}^n),\quad\mbox{with } \| ((-\Delta_e)^2 + m^2\mathbb{I})^{-\alpha} \|_{L^2\mapsto L^\infty} \lesssim \frac{1}{|m|^{2\alpha}}. \]
	
	\medskip
	Further one defines, for $\alpha\in (0,1)$ and $u \in H^4(\mathbb{R}^n)$,
	\begin{align*}
		((-\Delta_e)^2 + m^2\mathbb{I})^{\alpha} u &= ((-\Delta_e)^2 + m^2\mathbb{I})\circ ((-\Delta_e)^2 + m^2\mathbb{I})^{-(1 - \alpha)} u \\
		&=  ((-\Delta_e)^2 + m^2\mathbb{I})^{-(1 - \alpha)}\circ ((-\Delta_e)^2 + m^2\mathbb{I}) u\\
		&=  \frac{1}{\Gamma(1 - \alpha)} \int_0^{\infty} t^{-\alpha} e^{-t((-\Delta_e)^2 + m^2\mathbb{I})} (((-\Delta_e)^2 + m^2\mathbb{I}) u) \, dt.
	\end{align*}
	However, $H^4(\mathbb{R}^n)\subseteq \mathcal{D}\left(((-\Delta_e)^2 + m^2\mathbb{I})^{\alpha}\right)=H^{4\alpha}(\mathbb{R}^n)$, for $\alpha\in (0,1)$, can be seen easily through the Fourier transform definition. We refer \cite{GSU20} for this approach and the related mapping properties.
	
	\subsection*{In $(M,g)$:} We would like to do the same exercise for the closed manifold $(M, g)$ that we are interested in. From \eqref{hk1} and \eqref{2.2}, we have the pointwise bound on $e^{-t \mathcal{L}_g } v$ as  
	\begin{align}
		|e^{-t \mathcal{L}_g } v(x)| 
		&= \left| \int_M \mathcal{K}_{\mathcal{L}_g }(t,x, y) v(y) \, dV_g(y) \right| \notag\\
		&\lesssim e^{-m^2 t}\, \int_M  \frac{1}{t^{n/4}}R\left( \frac{d(x, y)}{t^{1/4}} \right) |v(y)| \, dV_g(y), \notag\\
		&\lesssim e^{-m^2 t}\, \|R\|_{L^2(M)} \, \|v\|_{L^2(M)}\label{eL}
	\end{align}
	for $v\in L^2(M)$.
	
	\medskip
	Now we define the fractional power of $\mathcal{L}_g $. Let  \( \alpha \in (0,1) \). We define $\mathcal{L}_g ^{-\alpha}$ over $L^2(M)$ as 
	\begin{equation}\label{L-alpha}
		\mathcal{L}_g ^{-\alpha} v = \frac{1}{\Gamma(\alpha)} \int_0^{\infty} t^{\alpha - 1} e^{-t\mathcal{L}_g } v \, dt.
	\end{equation}
	for $v \in L^2(M)$
	
	\medskip
	Using \eqref{eL}, as before it can be seen that
	$$\left| \mathcal{L}_g ^{-\alpha} v(x) \right| \leq C  \, \frac{ \|R\|_{L^2(M)} \, \|u\|_{L^2(M)}}{|m|^{2\alpha}}   < \infty, \quad m \in \mathbb{R} \setminus \{0\},$$
	or, the mapping property follows as
	\[ \mathcal{L}_g ^{-\alpha}: L^2(M)\mapsto L^\infty(M),\quad \| \mathcal{L}_g ^{-\alpha} \|_{L^2\mapsto L^\infty}\lesssim \frac{1}{|m|^{2\alpha}}. \]

	\medskip
	Let us recall \eqref{hk1} here, since all the eigenvalues of $\mathcal{L}_g $ are positive, and $0<(\lambda_1+m^2)\leq(\lambda_2+m^2)\leq\cdots\leq(\lambda_k+m^2)\to \infty$, then $\exists \,\,\beta>0$ such that
	$$\|e^{-t\mathcal{L}_g } \,v(x)\|_{L^2(M)} \leq e^{-\beta t}\,\|v(x)\|_{L^2(M)},$$
	for $v\in L^2(M)$.
	
	\medskip
	
	Now by using Minkowski's integral inequality, we get
	\begin{align}
		\|\mathcal{L}_g ^{-\alpha} v\|_{L^2(M)} 
		&= \left( \int_M \left| \frac{1}{\Gamma({\alpha})} \int_0^\infty t^{-(1-\alpha)} e^{-t\mathcal{L}_g } v(x) \, dt \right|^2 dV_g(x) \right)^{ \frac{1}{2} } \notag\\
		&\leq \frac{1}{\Gamma({\alpha})} \int_0^\infty t^{-(1-\alpha)} \left( \int_M |e^{-t\mathcal{L}_g } v(x)|^2 \, dV_g(x) \right)^{ \frac{1}{2} } dt \notag\\
		&\quad= \frac{1}{\Gamma({\alpha})} \int_0^\infty t^{-(1-\alpha)} \|e^{-t\mathcal{L}_g } v\|_{L^2(M)} \, dt \notag\\
		&\qquad\leq \frac{\|v\|_{L^2(M)}}{\Gamma({\alpha})} \int_0^\infty t^{-(1-\alpha)} e^{-\beta t} \, dt \notag\\
		&\qquad= \frac{\|v\|_{L^2(M)}}{\Gamma(\alpha)} \, \Gamma(\alpha)\,\frac{1}{\beta^{\alpha}} = \frac{\|v\|_{L^2(M)}}{\beta^{\alpha}}.\label{cv}
	\end{align}
	Hence, the mapping property 
	\[ \mathcal{L}_g ^{-\alpha}: L^2(M)\mapsto L^2(M),\quad\mbox{with } \| \mathcal{L}_g ^{-\alpha} \|_{L^2\mapsto L^2}\leq \,\frac{1}{\beta^{\alpha}}. \]

	\medskip
	Next, we define $\mathcal{L}_g ^{\alpha}$ over $H^4(M)$  as
	\[
	\mathcal{L}_g ^{\alpha} u = \mathcal{L}_g  \circ \mathcal{L}_g ^{-(1 - \alpha)} u =  \mathcal{L}_g ^{-(1 - \alpha)}\circ \mathcal{L}_g  u=  \frac{1}{\Gamma(1 - \alpha)} \int_0^{\infty} t^{-\alpha} e^{-t\mathcal{L}_g } (\mathcal{L}_g  u) \, dt, 
	\]
	for $u \in H^4(M)\subseteq \mathcal{D}(\mathcal{L}_g ^{\alpha})$, and  $\alpha\in (0,1)$. See \cite{CO24}.
	
	\subsection{Symbol and order}\label{sym-ord}
	Now, let us discuss the symbol and order of the operator $\mathcal{L}_{g} ^{\frac{1}{2}}$ defined over $(M,g)$. Up to the level of chart, let us summarise our understanding of the operator $\mathcal{L}_{g} ^{\frac{1}{2}}:=((-\Delta_g)^2+m^2\mathbb{I})^{\frac{1}{2}}$ over $(\mathbb{R}^n, g)$. We begin with the following result
	\begin{theorem}[\cite{FGKRSU25}]
		Let  $\beta > 0$ and $g$ be a $C^\infty$ Riemannian metric on $\mathbb{R}^n$, $n \geq 2$, which agrees with the Euclidean metric outside of a compact set. Then the operator $(-\Delta_g)^\beta\in \Psi^{2\beta}_{1,0}(\mathbb{R}^n)$, and is a classical elliptic pseudodifferential operator on $\mathbb{R}^n$, with principal symbol 
		\[
		\left(\sum_{j,k=1}^n g^{jk}(x)\xi_j\xi_k\right)^\beta, \quad (x,\xi) \in T^*\mathbb{R}^n \setminus \{0\}.
		\] 
	\end{theorem}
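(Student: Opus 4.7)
The plan is to obtain $(-\Delta_g)^{\beta}$ via Seeley's construction of complex powers of an elliptic operator. The starting point is that $-\Delta_g$ is a positive self-adjoint second-order differential operator on $\mathbb{R}^{n}$, elliptic with principal symbol $p_{2}(x,\xi):=\sum_{j,k}g^{jk}(x)\xi_{j}\xi_{k}$, which is strictly positive for $\xi\neq 0$ because $g$ is Riemannian. Since $g$ agrees with the Euclidean metric outside a compact set $K\subset\mathbb{R}^{n}$, all symbols of $-\Delta_g$ are controlled uniformly in $x$: on $\mathbb{R}^{n}\setminus K$ the operator is literally the Euclidean Laplacian.

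First I would construct a parametrix for the resolvent. For $\lambda$ in a sector $\Lambda\subset\mathbb{C}$ disjoint from $[0,\infty)$, the operator $R(\lambda):=(-\Delta_g-\lambda)^{-1}$ is a parameter-dependent pseudodifferential operator of order $-2$, whose full symbol admits an asymptotic expansion
$$
r(x,\xi,\lambda)\sim \sum_{j\geq 0} r_{-2-j}(x,\xi,\lambda),\qquad r_{-2}(x,\xi,\lambda)=\bigl(p_{2}(x,\xi)-\lambda\bigr)^{-1},
$$
with the remaining $r_{-2-j}$ produced recursively by the standard symbol calculus. The fact that $g$ is Euclidean off $K$ means that each $r_{-2-j}$ is compactly supported in $x$ for $j\geq 1$, so uniform symbol estimates on $\mathbb{R}^{n}$ are automatic.

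Next I would define the complex powers using the contour integral
$$
(-\Delta_g)^{-z}=\frac{i}{2\pi}\int_{\Gamma}\lambda^{-z}R(\lambda)\,d\lambda,\qquad \operatorname{Re}z>0,
$$
where $\Gamma$ encircles the spectrum, and for $\beta>0$ set $(-\Delta_g)^{\beta}=(-\Delta_g)^{k}\circ(-\Delta_g)^{-(k-\beta)}$ with $k\in\mathbb{N}$, $k>\beta$. This agrees with the spectral-theoretic definition, as both coincide on the common core of smooth compactly supported functions. Inserting the symbol expansion of $R(\lambda)$ into the contour integral term-by-term yields the full symbol of $(-\Delta_g)^{-z}$; the leading piece is
$$
\frac{i}{2\pi}\int_{\Gamma}\lambda^{-z}\bigl(p_{2}(x,\xi)-\lambda\bigr)^{-1}d\lambda = p_{2}(x,\xi)^{-z},
$$
by Cauchy's formula, since $p_{2}(x,\xi)>0$ is enclosed by $\Gamma$. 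Multiplying by the differential operator $(-\Delta_g)^{k}$ and setting $z=k-\beta$ gives principal symbol $p_{2}(x,\xi)^{\beta}$. Homogeneity of each term shows the operator is classical, and nonvanishing of $p_{2}$ for $\xi\neq 0$ gives ellipticity.

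The main obstacle is ensuring that the construction produces a global element of $\Psi^{2\beta}_{1,0}(\mathbb{R}^{n})$ rather than merely a local symbol in each chart; this is where the hypothesis that $g$ coincides with the Euclidean metric outside a compact set becomes essential. Using a partition of unity $1=\chi_{K}+\chi_{\infty}$, on $\operatorname{supp}\chi_{\infty}$ the operator is exactly the Fourier multiplier with symbol $|\xi|^{2\beta}\in S^{2\beta}_{1,0}$, while on $\operatorname{supp}\chi_{K}$ the Seeley symbol expansion provides the required estimates; the off-diagonal terms are smoothing by pseudolocality. Gluing these yields $(-\Delta_g)^{\beta}\in\Psi^{2\beta}_{1,0}(\mathbb{R}^{n})$ with the asserted principal symbol.
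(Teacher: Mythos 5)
The paper states this theorem only as a citation to \cite{FGKRSU25} and does not give a proof of its own, so there is no internal argument to compare your proposal against; I therefore evaluate the proposal on its merits. The Seeley contour-integral construction of complex powers is indeed the classical route to showing that fractional powers of an elliptic operator are pseudodifferential, and your outline — parametrix for $R(\lambda)$, contour integral for $(-\Delta_g)^{-z}$, Cauchy's formula to identify the principal symbol $p_2(x,\xi)^{-z}$, then composing with an integer power, and finally gluing with a partition of unity against the Euclidean tail to get global estimates on $\mathbb{R}^n$ — has the right overall shape.

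One step, however, does not go through as written. On $\mathbb{R}^n$ with $g$ Euclidean off a compact set, $\operatorname{spec}(-\Delta_g)=[0,\infty)$, so $0$ lies in the spectrum and a contour $\Gamma$ cannot ``encircle the spectrum'' while staying in the resolvent set. Concretely, the operator-valued integral $\frac{i}{2\pi}\int_\Gamma\lambda^{-z}R(\lambda)\,d\lambda$ has a divergence near $\lambda=0$ for $\operatorname{Re}z>0$, since $\|R(\lambda)\|\sim|\lambda|^{-1}$ there, reflecting the fact that $(-\Delta_g)^{-z}$ is genuinely unbounded. The usual fixes are (a) to run Seeley's construction purely at the symbolic level, where $\lambda$ and $\xi$ are coupled via $|\lambda|\lesssim(1+|\xi|^2)^2$ and the region $\lambda\to0$, $\xi\to0$ is irrelevant to the pseudodifferential structure, producing a parametrix rather than the exact operator; or (b) to replace $-\Delta_g$ by $-\Delta_g+c$, $c>0$, which has a spectral gap, apply Seeley there, and then show $(-\Delta_g)^\beta-(-\Delta_g+c)^\beta$ is of strictly lower order. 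Relatedly, the claim that the Seeley-defined operator coincides with the spectrally defined $(-\Delta_g)^\beta$ (modulo smoothing) is asserted but not argued, and is precisely the point where the divergence at $\lambda=0$ must be confronted. Your use of the Euclidean-tail hypothesis to obtain uniform-in-$x$ symbol estimates and a Fourier-multiplier description at infinity, modulo pseudolocality, is the correct way to address the non-compactness of $\mathbb{R}^n$.
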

	\begin{remark}
		In the preceding finding, we understand $(-\Delta_g)^\beta$ as follows. Let $\beta=m+\alpha$, where $m\in\mathbb{Z}$ and $\alpha\in (0,1)$, then $(-\Delta_g)^\beta=(-\Delta_g)^m\circ (-\Delta_g)^\alpha$, where the operator $(-\Delta_g)^\alpha$ is defined via the functional calculus as follows. 
		
		\medskip
		Consider the operator $(-\Delta_g)$ in $(\mathbb{R}^n, g)$. Let \(e^{t\Delta_g} \) denote the associated heat semigroup on \(L^2(\mathbb{R}^n)\). Let $\alpha\in (0,1)$ and $v\in L^2(\mathbb{R}^n)$. We define \begin{equation*}
			(-\Delta_g)^{-\alpha}v(x)=\frac{1}{\Gamma(\alpha)}\int_0^\infty t^{\alpha-1}\,e^{t \Delta_g} v(x)\, dt. \end{equation*}
		The operator $(-\Delta_g)^{\alpha}$ over $H^2(\mathbb{R}^n)$ for $\alpha\in (0,1)$ is defined as \[ (-\Delta_g)^{\alpha}u(x)=\frac{1}{\Gamma(1-\alpha)}\int_0^\infty t^{-\alpha}\,e^{t \Delta_g} ((-\Delta_g)u(x))\,dt.
		\]
		However, $H^2(\mathbb{R}^n)\subseteq \mathcal{D}\left((-\Delta_g)^\alpha\right)=H^{2\alpha}(\mathbb{R}^n)$ for $\alpha\in (0,1)$. We refer to \cite{FGKU25} for further details.
	\end{remark}
	Next, we review the symbol class definitions.
	
	\medskip
	Assume \(m, \rho, \delta \in \mathbb{R} \) and \(0 \leq \rho, \delta \leq 1 \). We define the symbol class \( S^m_{\rho,\delta}(\mathbb{R}^n \times \mathbb{R}^n \times \mathbb{R}^n) \) to consist of all functions \( p \in C^\infty(\mathbb{R}^n \times \mathbb{R}^n \times \mathbb{R}^n) \) such that, for every compact set \( K \subset \mathbb{R}^n \) and for all multi-indices \( \alpha, \beta \), there exists a constant \( C_{K,\alpha,\beta} > 0 \) such that
	\[
	|\partial_x^\alpha \partial_\xi^\beta p(x, \xi)| \leq C_{K,\alpha,\beta} (1 + |\xi|)^{m - \rho |\beta| + \delta |\alpha|}, \quad \text{for all } x \in K,\, \xi \in \mathbb{R}^n.
	\]
	
	\medskip
	In particular, the subclass \( S^m_{1,0}(\mathbb{R}^n) \) consists of symbols \( p(x, \xi) \in C^\infty(\mathbb{R}^n \times \mathbb{R}^n \times \mathbb{R}^n)) \) for which the estimates
	\[
	|\partial_x^\alpha \partial_\xi^\beta p(x, \xi)| \leq C_{K,\alpha,\beta} (1 + |\xi|)^{m - |\beta|}
	\]
	hold uniformly for all \( x \in K  \), where \( K \) is any compact set.
	
	The space \( \Psi^m_{1,0}(\mathbb{R}^n) \) of pseudodifferential operators on \( \mathbb{R}^n \)
	associated with the symbol class \( S^m_{1,0}(\mathbb{R}^n \times \mathbb{R}^n \times \mathbb{R}^n) \)
	consisting of operators of the form
	\[
	A u(x) = \frac{1}{(2\pi)^n} \int_{\mathbb{R}^n} \int_{\mathbb{R}^n} e^{i(x - y)\cdot \theta} \, a(x, y, \theta) \, u(y) \, dy \, d\theta, \quad u \in C^\infty_0(\mathbb{R}^n),
	\]
	where the amplitude \( a(x, y, \theta) \in S^m_{1,0}(\mathbb{R}^n \times \mathbb{R}^n \times \mathbb{R}^n) \). For more details, we refer to  \cite {GS94}.

	\subsection*{Symbol class for $\mathcal{L}_g ^{\frac{1}{2}}$}
	We have 
	\[
	\mathcal{L}_g  = \left(-\sum_{j,k=1}^n g^{jk}(x) \partial_j \partial_k \right)^2 + m^2\mathbb{I},\quad m\in\mathbb{R}\setminus\{0\}.
	\]
	The symbol for \(\mathcal{L}_g \) is 
	\[ p(x, \xi) = \left(\sum_{j,k=1}^n g^{jk}(x) \xi_j \xi_k \right)^2 + m^2.\]
	This is a classical example of a symbol in \(S^2_{1,0}(\mathbb{R}^n \times \mathbb{R}^n) \), since it fulfills the symbol estimates: \[ |\partial_x^\alpha \partial_\xi^\beta p(x, \xi)| \leq C_{\alpha, \beta} (1 + |\xi|)^{2 - |\beta|}, \] for all multi-indices \(\alpha, \beta \). 
	
	Thus, \[ \mathcal{L}_g ^{ \frac{1}{2} } \in \Psi^2_{1,0}(\mathbb{R}^n) \]
	with its symbol
	$$\left(\left(\sum_{j,k=1}^n g^{jk}(x) \xi_j \xi_k \right)^2 + m^2\right)^{\frac{1}{2}}, \quad (x,\xi) \in T^*\mathbb{R}^n \setminus \{0\}.$$ 
	
	\section{Proof of Main Result}\label{sec3}
	
	To prove Theorem \ref{th1.1}, we divide the argument into multiple propositions.  Consider \(\omega_1 \subset \subset \mathcal{O}  \) as a non-empty open set.  Then, there exists another non-empty open set \(\omega_2 \subset \subset \mathcal{O}  \), such that \[ \overline{\omega_1} \cap \overline{\omega_2} = \emptyset. \]
	\begin{proposition}\label{p1}
		Under the assumptions of Theorem \ref{th1.1}, for any \( f \in C_0^\infty(\omega_1) \) and for all \( k = 0,1,2, \dots \), we have  
		\[
		\int_0^\infty \phi(s)\, s^k \, ds = 0,
		\]  
		where  
		\begin{equation}\label{phi}
			\phi(s) = \frac{\big(e^{-\frac{1}{s}\mathcal{L}_{g_1} } - e^{-\frac{1}{s}\mathcal{L}_{g_2} } \big) f (x)}{s^{\frac{1}{2}}}, \quad x \in \omega_2.
		\end{equation}
	\end{proposition}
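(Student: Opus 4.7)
The plan is to unwind the definition of $\phi$ by the substitution $t=1/s$, then show the resulting moment identities follow from locally applying the operator $\mathcal{L}_g$ to the identity we get from the hypothesis. Set
\[
w(t,x) := \bigl(e^{-t\mathcal{L}_{g_1}} - e^{-t\mathcal{L}_{g_2}}\bigr) f(x),\qquad H(x) := \int_0^\infty t^{-1/2}\, w(t,x)\, dt.
\]
The change of variables $t=1/s$, $ds=-dt/t^2$, transforms the quantity to be shown into
\[
\int_0^\infty \phi(s)\, s^k\, ds \;=\; \int_0^\infty t^{-3/2-k}\, w(t,x)\, dt \;=:\; I_k(x),
\]
so the goal reduces to verifying $I_k(x)=0$ for $x\in\omega_2$ and every $k\geq 0$. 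The hypothesis together with the semigroup representation \eqref{L-alpha} (with $\alpha=1/2$) gives $H\equiv \sqrt{\pi}\,(\mathcal{L}_{g_1}^{-1/2}f-\mathcal{L}_{g_2}^{-1/2}f)\equiv 0$ on $\mathcal{O}$, hence in particular on the neighborhood $\omega_2$ of $x$.

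The key mechanism is that on $\omega_2\subset\mathcal{O}$ the metrics coincide, so the fourth-order differential operator $\mathcal{L}_g:=((-\Delta_g)^2+m^2)$ is well defined and local there, and agrees with both $\mathcal{L}_{g_1}$ and $\mathcal{L}_{g_2}$. Because $H$ vanishes in an open neighborhood of $x\in\omega_2$, I get $\mathcal{L}_g^n H(x)=0$ for every $n\geq 0$. On the other hand, since $x\in\omega_2$ while $\operatorname{supp} f\subset\omega_1$ with $\overline{\omega_1}\cap\overline{\omega_2}=\emptyset$, the heat kernel bound \eqref{2.2} shows that $w(t,x)$ together with all its $t$-derivatives has faster-than-polynomial decay as $t\to 0^+$, and exponential decay as $t\to\infty$ (from the spectral gap $\lambda_k^2+m^2\geq m^2>0$). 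These decay estimates justify differentiation under the integral sign and the vanishing of all boundary terms in subsequent integrations by parts. Using the heat equations $\partial_t e^{-t\mathcal{L}_{g_j}}f=-\mathcal{L}_{g_j}e^{-t\mathcal{L}_{g_j}}f$ and the local identification $\mathcal{L}_{g_1}|_{\omega_2}=\mathcal{L}_{g_2}|_{\omega_2}$, I get pointwise on $\omega_2$
\[
\mathcal{L}_g\, w(t,\cdot)(x) \;=\; -\partial_t w(t,x), \qquad \mathcal{L}_g^{\,n}\, w(t,\cdot)(x) \;=\; (-1)^n \partial_t^{\,n} w(t,x).
\]

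Combining these two inputs, for each $n\geq 1$ and $x\in\omega_2$,
\[
0 \;=\; \mathcal{L}_g^{\,n} H(x) \;=\; (-1)^n\!\int_0^\infty t^{-1/2}\, \partial_t^{\,n} w(t,x)\, dt \;=\; \frac{(2n-1)!!}{2^n}\int_0^\infty t^{-1/2-n}\, w(t,x)\, dt,
\]
where the last equality comes from integrating by parts $n$ times and computing $\frac{d^n}{dt^n}t^{-1/2}=(-1)^n\frac{(2n-1)!!}{2^n}t^{-1/2-n}$. Setting $n=k+1$ identifies the integral on the right with $I_k(x)$, and since $(2n-1)!!/2^n\neq 0$, this yields $I_k(x)=0$ for all $k\geq 0$. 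Reverting to the variable $s$ finishes the proof.

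The main obstacle is the careful justification that the integration by parts is legitimate, i.e.\ that boundary contributions at $t=0^+$ and $t=\infty$ vanish for each of the intermediate integrals $\int_0^\infty t^{-1/2-j}\partial_t^{n-j}w(t,x)\,dt$; this requires the combination of the sub-Gaussian heat kernel estimate \eqref{2.2} (ensuring rapid vanishing as $t\downarrow 0$ because $\operatorname{dist}(\overline{\omega_1},\overline{\omega_2})>0$) and the exponential decay from the positive spectral gap of $\mathcal{L}_g$. Interchanging $\mathcal{L}_g^{\,n}$ with the $t$-integral in defining $\mathcal{L}_g^{\,n}H$ is a secondary technical point handled by the same estimates applied to jets in $x$.
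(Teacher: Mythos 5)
Your proof is correct in substance and arrives at precisely the same moment identities as the paper, but it generates them by a dual mechanism.  The paper's mechanism is to replace the source $f$ by $\mathcal{L}_g^k f\in C_0^\infty(\omega_1)$ (using that $g_1=g_2$ on $\omega_1$) and then invoke the hypothesis for each of these modified test functions; the $t$-derivatives then appear through the semigroup commutation $e^{-t\mathcal{L}_{g_i}}\mathcal{L}_{g_i}^k f = (-1)^k\partial_t^k e^{-t\mathcal{L}_{g_i}}f$.  You instead use the hypothesis only once to conclude $H\equiv 0$ on $\mathcal{O}$, and then exploit the locality of $\mathcal{L}_g$ (using that $g_1=g_2$ on $\omega_2$) by hitting $H$ with $\mathcal{L}_g^n$ at the observation point.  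Both then reduce to the same integration by parts in $t$, with the same boundary decay estimates from \eqref{3.4} and \eqref{3.5}.

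Two points to tighten.  First, the interchange $\mathcal{L}_g^n H(x) = \int_0^\infty t^{-1/2}\mathcal{L}_g^n w(t,\cdot)(x)\,dt$ is the only genuinely new technical step your route introduces.  As stated, it asks you to differentiate under the $t$-integral in the $x$-variable, which requires sub-Gaussian bounds on the $x$-derivatives of the heat kernels $\mathcal{K}_{\mathcal{L}_{g_i}}$; the estimate \eqref{2.2} cited in the paper controls only the kernels themselves.  This gap is easy to close — either cite off-diagonal derivative bounds for heat kernels of elliptic operators on compact manifolds, or use the symmetry of the kernel and integrate by parts in $y$ to move $\mathcal{L}_g^n$ onto $f$, after which the bound \eqref{3.3} suffices.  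Note, though, that the latter fix essentially recovers the paper's own route (applying the operator to the source), so the distinction between the two approaches shrinks once this step is made rigorous.  Second, the last display has a harmless sign slip: the chain
\[
\mathcal{L}_g^{\,n}H(x) = (-1)^n\!\int_0^\infty t^{-1/2}\partial_t^n w\,dt
\]
followed by $n$ integrations by parts yields $(-1)^n\frac{(2n-1)!!}{2^n}\int_0^\infty t^{-1/2-n}w\,dt$, not $\frac{(2n-1)!!}{2^n}\int_0^\infty t^{-1/2-n}w\,dt$.  Since the coefficient is nonzero either way, the conclusion $I_k(x)=0$ is unaffected.
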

	
	\begin{proof} The proof follows the steps outlined in \cite[Proof of Theorem 1.1]{FGKU25}.
		
		Since \( g_1 = g_2 = g \) on \( \omega_1 \), and \( C_0^\infty(\omega_1) \) is an invariant subspace under \( \Delta_g \), we obtain, for any \( k = 0,1,2, \dots \),  
		\[
		\Delta_{g_1}^{2} f = \Delta_{g_2}^{2} f = \Delta_g^{2} f\,\, \implies \mathcal{L}_{g_1} ^k f=\mathcal{L}_{g_2} ^k f=\mathcal{L}_{g} ^k f
		\quad \text{on } \omega_1.\]
		From our assumption, this leads to  
		\[ \mathcal{L}_{g_1} ^{-\frac{1}{2}}\mathcal{L}_{g} ^kf|_\mathcal{O}  = \mathcal{L}_{g_2} ^{-\frac{1}{2}}\mathcal{L}_{g} ^kf|_\mathcal{O} , \quad \forall f \in C_0^\infty(\omega_1), \quad k = 0,1,2, \dots. \notag
		\]
		Thus from \eqref{L-alpha} for $\alpha=\frac{1}{2}$ there and for $x\in \mathcal{O} $,  we have 
		\begin{equation}
			\int_0^\infty t^{-\frac{1}{2}}\, \left((e^{-t\mathcal{L}_{g_1} } - e^{-t\mathcal{L}_{g_2} })\,\,\mathcal{L}_{g} ^k f\right) (x)\,dt = 0,  \label{eq:heat_difference}
		\end{equation}
		or, by using the equation $\partial_t^k (e^{-t\mathcal{L}_{g_i}}v)=e^{-t\mathcal{L}_{g_i}}(\mathcal{L}_{g_i}^k v)$ in $(M_i, g_i)$ for any $v\in C^\infty_0(M_i)$ and $x\in \mathcal{O}$, 
		\begin{equation}  
			\int_0^\infty t^{-\frac{1}{2}}\,\left(\partial_t^k (e^{-t\mathcal{L}_{g_1} } - e^{-t\mathcal{L}_{g_2} }) f \right)(x) \,dt = 0. \label{3.1}
		\end{equation}
		
		Now, for \( l = 0,1,\dots,k-1 \), and for any \( x \in \omega_2 \) and \( f \in C_0^\infty(\omega_1) \), we express the heat kernel difference as  
		\begin{align}
			\partial_t^l ((e^{-t\mathcal{L}_{g_1} } - e^{-t\mathcal{L}_{g_2} }) f)(x) &= ((e^{-t\mathcal{L}_{g_1} } - e^{-t\mathcal{L}_{g_2} })\,\,\mathcal{L}_{g} ^k f)(x) \notag \\
			&= \int_{\omega_1} (\mathcal{K}_{\mathcal{L}_{g_1} }(t,x,y) - \mathcal{K}_{\mathcal{L}_{g_2} }(t,x,y))  \mathcal{L}_{g} ^k f(y) \, dV_g(y). \label{3.2}
		\end{align}
		
		Thus, for \( t > 0 \), \( x \in \omega_2 \), and \( k = 0,1,2,\dots \), we obtain the bound  
		\begin{equation}
			\left |\partial_t^l ((e^{-t\mathcal{L}_{g_1} } - e^{-t\mathcal{L}_{g_2} }) f)(x) \right | \leq \| \mathcal{K}_{\mathcal{L}_{g_1} } -\mathcal{K}_{\mathcal{L}_{g_2} } \|_{L^\infty(\omega_1 \times \omega_2)}\,\, \|  \mathcal{L}_{g} ^k f \|_{L^1(\omega_1)}. \label{3.3}
		\end{equation}
		
		\medskip
		For \( t \in (0,1) \), using \eqref{2.2} and \eqref{3.3}, we obtain for \( l = 0,1,2,\dots \)  
		\begin{equation}\begin{aligned}
				\left |\partial_t^l ((e^{-t\mathcal{L}_{g_1} } - e^{-t\mathcal{L}_{g_2} }) f)(x) \right | &\leq C e^{-m^2t} e^{ -c_1/t^{\frac{1}{3}}}  \|  \mathcal{L}_{g} ^k f \|_{L^2(\omega_1)}, \label{3.4}\\
				&\to 0 \quad \text{as } t \to 0,
		\end{aligned}\end{equation}
		where, \( c_1 >0 \) depends on \( d_g(\overline{\omega_1}, \overline{\omega_2}) \).

		According to \cite[Theorem 1]{Varo85}, 
		\begin{align}
			\|e^{-t\mathcal{L}_{g_i}}  \mathcal{L}_g ^{2l} f\| \leq \frac{C}{t^\frac{n}{2}} \,\,  \| \mathcal{L}_{g} ^k f \|_{L^1(\omega_1)} \quad \text{for}\,\, t>0,
		\end{align}
		which implies that for   \( t \in [1,\infty) \) and \( l=0,1,2,\dots,k-1 \), applying to \eqref{3.2}, yields  
		\begin{equation}\begin{aligned}
				\left |\partial_t^l ((e^{-t\mathcal{L}_{g_1} } - e^{-t\mathcal{L}_{g_2} }) f)(x) \right | &\leq \frac{C}{t^\frac{n}{2}} \,\,  \|  \mathcal{L}_{g} ^k f \|_{L^1(\omega_1)}, \label{3.5}\\
				&\to 0 \quad \text{as } t \to \infty.
		\end{aligned}\end{equation}
		Now,  using integration by parts in \eqref{3.1} along with the decay estimates \eqref{3.4} and \eqref{3.5}, we obtain  
		\begin{equation}
			\int_0^\infty  (e^{-t\mathcal{L}_{g_1} } - e^{-t\mathcal{L}_{g_2} }) f \,\,t^{-(k+\frac{1}{2})} \,dt = 0 \quad \text{for } k=1,2,\dots
		\end{equation}
		Rewriting the above for \(k = 0,1,2,\dots \), we have \[ \int_0^\infty (e^{-t\mathcal{L}_{g_1} } - e^{-t\mathcal{L}_{g_2} }) f \,\,t^{-(k+1+\frac{1}{2})} \,dt = 0. \]
		
		Finally, by changing the variable \(s = \frac{1}{t} \), we get at \[ \int_0^\infty \phi(s)\, s^k \, ds = 0, \]
		where, $\phi(s) = \frac{\big(e^{-\frac{1}{s}\mathcal{L}_{g_1} } - e^{-\frac{1}{s}\mathcal{L}_{g_2} } \big) f}{s^{\frac{1}{2}}}$.  This concludes the proof.
	\end{proof}
	
	\begin{proposition}\label{p2}
		Here $\phi(s) \in L^2(0,\infty)$, and the function  defined by 
		\begin{equation}\label{f-phi}
			f(z):=\int_0^\infty \phi(s)\, e^{2\pi i z\,s}
		\end{equation} is holomorphic on
		$\mathbb{C}^+:=\{ x+iy : y> 0\}$.
	\end{proposition}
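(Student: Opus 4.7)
The plan is to derive both claims directly from the decay bounds on $\phi$ already established inside the proof of Proposition \ref{p1}. For the $L^2$ assertion, I would split the $s$-integral at $s = 1$, using $s = 1/t$ to convert between the two regimes. For $s \in [1, \infty)$ (i.e.\ small $t$), the $l = 0$ case of \eqref{3.4} gives
\[ |\phi(s)| \lesssim s^{-1/2}\, e^{-m^2/s}\, e^{-c_1 s^{1/3}}, \]
which decays super-exponentially and hence lies in $L^2(1, \infty)$. For $s \in (0, 1]$ (i.e.\ large $t$), the $l = 0$ case of \eqref{3.5} gives $|\phi(s)| \lesssim s^{(n-1)/2}$, which is square-integrable near $0$ provided $n \geq 1$ (and in our setting $n \geq 2$).

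For the holomorphy of $f$, the plan is to apply Morera's theorem after verifying absolute, locally uniform convergence of the defining integral on $\mathbb{C}^+$. Fix a compact set $K \subset \mathbb{C}^+$ and set $\epsilon := \inf_{z \in K} \operatorname{Im}(z) > 0$. Then for every $z \in K$ and every $s > 0$,
\[ |\phi(s)\, e^{2\pi i z s}| \leq |\phi(s)|\, e^{-2\pi \epsilon s}. \]
Because $\phi \in L^2(0,\infty)$ and $s \mapsto e^{-2\pi\epsilon s} \in L^2(0,\infty)$, the Cauchy--Schwarz inequality produces an $L^1$ dominator independent of $z \in K$. Dominated convergence then yields continuity of $f$ on $K$. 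Since $z \mapsto \phi(s)\, e^{2\pi i z s}$ is entire for each fixed $s$, for any triangle $T$ contained in the interior of $K$ Fubini's theorem and Cauchy's theorem combine to give
\[ \oint_T f(z)\, dz = \int_0^\infty \phi(s) \oint_T e^{2\pi i z s}\, dz\, ds = 0, \]
so Morera's theorem implies $f$ is holomorphic on $\mathbb{C}^+$.

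The proof is essentially bookkeeping with pointwise bounds already in hand; the only mild subtlety is to keep track of the $s^{-1/2}$ normalization when translating $(e^{-t\mathcal{L}_{g_1}} - e^{-t\mathcal{L}_{g_2}}) f$ into $\phi(s)$, and to apply \eqref{3.4} and \eqref{3.5} at $l = 0$ (rather than at the higher $l$ used for the moment vanishing in Proposition \ref{p1}) so that no derivatives in $t$ are involved.
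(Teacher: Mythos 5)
Your proposal follows essentially the same route as the paper: split the $s$-axis at $1$, use \eqref{3.4} and \eqref{3.5} at $l=0$ to get pointwise decay of $\phi$ on each piece, deduce $\phi\in L^2(0,\infty)$, and then establish holomorphy by combining a locally uniform $L^1$ domination (so dominated convergence gives continuity) with Fubini and Morera. The only difference is cosmetic: you correctly carry the $s^{-1/2}$ normalization through, obtaining $|\phi(s)|\lesssim s^{(n-1)/2}$ near $0$, whereas the paper records $s^{n/2}$ (an inessential slip, since both exponents give square-integrability on $(0,1]$); likewise your explicit Cauchy--Schwarz pairing of $\phi$ with $e^{-2\pi\epsilon s}$ is a cleaner version of the estimate the paper writes.
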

	\begin{proof}
		Using \eqref{3.4} and \eqref{3.5}, for $l=0$ there, we get
		$$|\phi(s)|\leq C s^\frac{n}{2} \quad \text{for}\,\, s\in (0,1] \quad \text{and}\,\,\, |\phi(s)|\leq C e^{-c_1 s^{\frac{1}{3}}} \text{for}\,\, s\in (1, \infty). $$
		
		Observed that,  
		$$\int_0^1 |\phi(s)|^2 ds \leq C^2 \int_0^1 s^n ds < \infty,$$
		and 
		\begin{align*}
			\int_1^\infty |\phi(s)|^2 ds &\leq C^2 \int_1^\infty e^{-2c_1 s^{1/3}} ds\\
			&=3C^2 \int_{1}^{\infty} e^{-2c_1 s} \,s^2 ds <\infty.
		\end{align*}
		Since both integrals are finite, it follows that
		\begin{equation}\label{phiL2} \phi\in L^2(0,\infty).\end{equation} 
		
		\medskip
		Let $f$ is given as in \eqref{f-phi}. We estimate \( f(z) \) as
		\begin{align*}
			\left| f(z) \right| &\leq \int_0^\infty |\phi(s)| e^{-2\pi\,y\,s} ds, \quad z=x+\mathrm{i}y, \mbox{ with }y>0, \\
			&\leq C \|\phi\|_{L^2(0,\infty)}.
		\end{align*}
		Thus, it is well defined pointwise on \( \mathbb{C}^+ \). 
		
		Let \( z \in \mathbb{C}^+ \) and let $\{z_n\}_{n\in\mathbb{N}}$ be a sequence in $\mathbb{C}^{+}$ such that \( z_n \to z \). Then, one finds
		\begin{align*}
			\left|  f(z_n) - f(z) \right|^2 & \leq \left(  \int_0^\infty  |\phi(s)| |e^{2\pi i z_n s}- e^{2\pi i z s}| ds \right)^2 \\
			&\leq \|\phi\|_{L^2}^2 \int_0^\infty  | \phi(s)| |e^{2\pi i z_n s}- e^{2\pi i z s} |^2 ds.
		\end{align*}
		For both \( \operatorname{Im}(z) > \delta > 0 \), and  \( \operatorname{Im}(z_n) > \delta \), gives
		
		\[
		|e^{2\pi i z_n s}- e^{2\pi i z s} | \leq 4 e^{-2\delta s}.
		\]
		Therefore,  using the Lebesgue dominated convergence theorem, we conclude that \( f\) is continuous at \( z \). 
		
		\medskip
		Next, let \( T \) be any closed triangle in \( \mathbb{C}^+ \). Then, one sees 
		\begin{align*}
			\int_T f(z) dz &= \int_T \int_0^\infty \phi(s) e^{2\pi i z s} ds dz \\
			&= \int_0^\infty \phi(s) ds \int_T e^{2\pi i z s} dz = 0.
		\end{align*}
		Thanks to Morera's theorem \cite{Conw73}, we conclude that \( f \) is holomorphic in \( \mathbb{C}^+ \). This completes the proof.
	\end{proof}
	
	\begin{proposition}\label{p3}
		\( f(z) \) defined in \eqref{f-phi} is identically zero over $\mathbb{C}^{+}$. 
	\end{proposition}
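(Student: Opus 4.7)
My approach is to combine the moment vanishing from Proposition~\ref{p1} with a contour deformation that exploits the analyticity of the heat semigroup in complex time, thereby extending $f$ holomorphically across $z=0$; the moments will then force the Taylor coefficients of the extension to vanish, and the identity theorem will propagate this to all of $\mathbb{C}^+$.

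First, I would translate the moment vanishing into a flatness estimate via iterated integration by parts. Set $\Phi_0(s) = \int_s^\infty \phi(u)\,du$ and $\Phi_j(s) = \int_s^\infty \Phi_{j-1}(u)\,du$ for $j \geq 1$. A Fubini computation combined with Proposition~\ref{p1} gives $\Phi_j(0) = \int_0^\infty \phi(u) u^{j+1}/(j+1)!\,du = 0$, while the decay $|\phi(s)| \lesssim e^{-c_1 s^{1/3}}$ from Proposition~\ref{p2} ensures $\Phi_j \in L^1(0,\infty)$ for every $j$. Integrating by parts $k$ times in $f(z) = \int_0^\infty \phi(s)\, e^{2\pi i z s}\,ds$, with all boundary terms at $0$ and $\infty$ killed by $\Phi_j(0)=0$ and the decay, yields
\[
f(z) = (2\pi i z)^k \int_0^\infty \Phi_{k-1}(s)\, e^{2\pi i z s}\,ds, \qquad z \in \overline{\mathbb{C}^+},\ k \geq 1,
\]
and hence $|f(z)| \leq C_k |z|^k$ uniformly on $\overline{\mathbb{C}^+}$ for every $k$.

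Next, because $\mathcal{L}_{g_j}$ is self-adjoint and positive, the heat semigroup $w \mapsto e^{-w\mathcal{L}_{g_j}}$ extends analytically to the right half-plane $\{\operatorname{Re} w > 0\}$; substituting $w = 1/s$ shows that $\phi$ extends holomorphically to $\{\operatorname{Re} s > 0\}$, with $\phi(s) \to 0$ as $s \to 0$ in that region (via the $e^{-m^2 \operatorname{Re}(1/s)}$ prefactor). A Cauchy contour deformation then rotates the path of integration in the definition of $f$ from $[0,\infty)$ to $e^{i\theta}[0,\infty)$ for any $\theta \in (-\pi/2, \pi/2)$, producing holomorphic extensions of $f$ to each half-plane $\{z : \operatorname{Im}(ze^{i\theta}) > 0\}$; their union is the slit plane $\Omega = \mathbb{C} \setminus \{iy : y \leq 0\}$. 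Applying the same iterated integration by parts to the rotated integral propagates the bound $|f(z)| \leq C_k|z|^k$ into $\Omega$, so Taylor expanding the extension of $f$ at any $z_0 \in \mathbb{C}^+$ sufficiently close to $0$ and letting $z_0 \to 0$ forces all Taylor coefficients to vanish. Thus $f \equiv 0$ in a neighborhood within $\Omega$, and by the identity theorem $f \equiv 0$ on $\mathbb{C}^+$.

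The hard part will be justifying the contour deformation rigorously: although the semigroup analyticity makes $\phi$ holomorphic on the right half-plane in $s$, the Gaussian-type heat kernel bound of Theorem~\ref{th2.1} is stated for real positive times, and the $e^{-c s^{1/3}}$-type decay needs to be established along the rotated rays $e^{i\theta}(0,\infty)$ uniformly for $\theta$ in compact subsets of $(-\pi/2, \pi/2)$. This sectorial analogue of Theorem~\ref{th2.1} should be accessible via spectral representations and a Phragm\'en--Lindel\"of argument, but this analytic input is the delicate step around which the rest of the argument is organized.
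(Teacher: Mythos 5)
Your approach is genuinely different from the paper's. The paper expands $e^{2\pi i z s}$ into its power series, invokes Fubini for $z$ in a small disk $D \subset \mathbb{C}^+$, and applies the Cauchy integral formula; you instead use iterated integration by parts against the vanishing moments to get the flatness estimate $|f(z)| \leq C_k |z|^k$ on $\overline{\mathbb{C}^+}$, combined with contour rotation (exploiting the analyticity of the heat semigroup in complex time) to extend $f$ across $\mathbb{R}$. The integration-by-parts computation is essentially right (minor index slip: $\Phi_j(0) = \int_0^\infty \phi(u)\,u^j/j!\,du$), using the analytic continuation of $s\mapsto\phi(s)$ to $\{\operatorname{Re}s>0\}$ is a good instinct, and you correctly flag the unverified sectorial heat-kernel decay as a nontrivial analytic input.

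The fatal gap is the concluding step. Polynomial flatness $|f(z)| \leq C_k|z|^k$ on $\mathbb{C}^+$, even together with a holomorphic extension of $f$ to the slit plane $\Omega = \mathbb{C}\setminus\{iy : y\le 0\}$, does not force $f\equiv 0$: the origin remains a boundary point of $\Omega$, so the radius of convergence of the Taylor series at $z_0$ shrinks like $|z_0|$ as $z_0\to 0$, and the vanishing of each Taylor coefficient \emph{in the limit} $z_0\to 0$ never produces a fixed neighborhood where $f$ vanishes. A concrete counterexample to your reasoning is $h(z)=\exp\bigl(-(-iz)^{-1/4}\bigr)$ with the principal branch of the fourth root: it is holomorphic on $\Omega$, satisfies $|h(z)|\le\exp\bigl(-\cos(\pi/8)\,|z|^{-1/4}\bigr)$ near $0$ on all of $\Omega$ (hence beats every $C_k|z|^k$), and yet $h\not\equiv 0$. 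To make a flatness argument close, one would need either a holomorphic extension to a \emph{full} punctured neighborhood of $0$ together with boundedness there (so $0$ becomes a removable singularity and $f^{(k)}(0)=0$ applies), or exponential decay $|\phi(s)|\lesssim e^{-\alpha s}$ so that $f$ is holomorphic across $z=0$ directly; the available sub-exponential bound $|\phi(s)|\lesssim e^{-c_1 s^{1/3}}$ provides neither, and the constants $C_k$ in your flatness estimate in fact grow super-factorially because of this. Incidentally, the same sub-exponential rate also undermines the paper's own Fubini justification (the inequality $e^{2\pi|z|s}e^{-c_1 s^{1/3}}\le e^{(2\pi|z|-c_1)s^{1/3}}$ fails for $s>1$, so $\int_1^\infty e^{2\pi|z|s}|\phi(s)|\,ds$ diverges for every $|z|>0$), so this is a genuinely delicate point of the argument rather than a misstep peculiar to your write-up.
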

	
	\begin{proof}
		Let 
		\begin{equation}
			I= \int_0^\infty \sum_{k=0}^\infty \frac{(2\pi i \,z\,s)^k}{k!} \phi(s) \, ds.\label{e1}
		\end{equation}
		Then, we estimate \( |I| \) as follows:
		
		\begin{align*}
			|I| &\leq \int_0^\infty \sum_{k=0}^\infty \frac{(2\pi  \,|z|\,s)^k}{k!} |\phi(s)| \, ds \\
			&\quad=  \int_0^\infty e^{2\pi |z|s} |\phi(s)| \, ds\\
			&\quad= \int_0^1 e^{2\pi |z|s} |\phi(s)| \, ds + \int_1^\infty e^{2\pi |z|s} |\phi(s)| \, ds\\
			&\quad\leq  C \int_0^1 \underbrace{e^{2\pi |z|s}  s^{\frac{n-1}{2}}}_{h_z(s)} ds\,\, +\int_1^\infty \underbrace{e^{2\pi |z|s} e^{-\frac{m^2}{s}} s^{\frac{n-2}{4}} e^{-c_1 s^{\frac{1}{3}}}}_{g_z(s)} \, ds\\
			&\qquad=C\int_0^1 h_z(s) \, ds + \int_1^\infty g_z(s) \, ds.
		\end{align*}
		Here, we see that $ \left| \int_0^1 h_z(s) \, ds \right| < \infty$.
		For the following integral, define \(c_2(z) = -c_1 + 2\pi |z| \), where $c_1$ is the constant from \eqref{3.4}, and let
		\[ D= \{z\in \mathbb{C}^+: c_2(z) < 0\}, \quad \text{i.e.,} \quad |z|< \frac{c_1}{2\pi}. \]
		Then, given \(z\in D \), we estimate the second integral (using \eqref{3.5}):
		\begin{align*}
			\left| \int_1^\infty g_z(s) \, ds \right| 
			&\leq \int_1^\infty  e^{2\pi |z|s} s^{\frac{n-2}{4}} e^{-c_1 s^{\frac{1}{3}}} \, ds\\
			&\leq \int_1^\infty e^{(-c_1 + 2\pi |z|)s^{\frac{1}{3}}} s^{\frac{n-2}{4}} \, ds\\
			&= \int_1^\infty e^{c_2(z)s^{\frac{1}{3}}} s^{\frac{n-2}{4}} \, ds.
		\end{align*}
		Since \(c_2(z) < 0 \) for \(z \in D \), the integral converges, meaning that $ I < \infty \quad \forall z \in D.$ Using Fubini's theorem, we can interchange summation and integration on \eqref{e1} to obtain:
		$$\,\,\, \int_0^\infty \sum_{k=0}^\infty \frac{(2\pi i \,z\,s)^k}{k!} \phi(s) \, ds =\sum_{k=0}^\infty 
		\int_0^\infty \frac{(2\pi i \,z\,s)^k}{k!} \phi(s), \, ds,\quad \forall z \in D.$$
		Let $\Omega \subset\subset D$ be a non-empty, simply connected open set. For each $a \in \Omega$, the Cauchy integral formula yields
		\begin{align*}
			f(a) &= \frac{1}{2\pi i} \int_{\partial \Omega} \frac{f(z)}{z - a} \, dz \\
			&= \frac{1}{2\pi i} \int_{\partial \Omega} \frac{1}{z - a} \, dz \int_0^\infty \phi(s)\, e^{2\pi i z s} \, ds\\
			&= \frac{1}{2\pi i} \int_{\partial \Omega} \frac{1}{z - a} \, dz \int_0^\infty \sum_{k=0}^\infty \frac{(2\pi i z s)^k}{k!}  \phi(s) \, ds\\
			&= \frac{1}{2\pi i} \sum_{k=0}^\infty \int_{\partial \Omega} \frac{(2\pi i z)^k}{(z - a) k!} \, dz \int_0^\infty  \phi(s) s^k \, ds=0.
		\end{align*}
		As $f(z)$ is holomorphic in \(\mathbb{C}^+ \), the identity theorem implies that \[ f(z) = 0 \quad \forall z \in \mathbb{C}^+. \]
		This concludes the proof.
	\end{proof}
	\begin{proposition}\label{p4}
		$\phi$, defined in \eqref{phi}, is $0$ over $(0,\infty)$. \end{proposition}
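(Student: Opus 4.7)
\bigskip

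\noindent\textbf{Proof proposal for Proposition \ref{p4}.} The plan is to translate the vanishing of the holomorphic function $f$ on $\mathbb{C}^+$ into an $L^2$-Fourier-transform statement on each horizontal line $\{\operatorname{Im} z = y\}$, $y>0$, and then invoke injectivity of the Fourier transform to conclude that $\phi = 0$ almost everywhere.

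First, I would extend $\phi$ to all of $\mathbb{R}$ by declaring $\phi(s) = 0$ for $s \leq 0$. By \eqref{phiL2}, this extension lies in $L^2(\mathbb{R})$ with $\supp \phi \subset [0,\infty)$. For each fixed $y > 0$, the function $h_y(s) := \phi(s)\,e^{-2\pi y s}$ also belongs to $L^2(\mathbb{R})$ (indeed its $L^2$-norm is controlled by $\|\phi\|_{L^2(0,\infty)}$), so its Fourier transform is a well-defined element of $L^2(\mathbb{R})$. Using the definition \eqref{f-phi}, for $z = x + iy$ with $y > 0$, we have
\[
f(x+iy) \;=\; \int_{-\infty}^\infty \phi(s)\, e^{-2\pi y s}\, e^{2\pi i x s}\, ds \;=\; \widehat{h_y}(-x),
\]
so that $f(\cdot + iy)$ is, up to a reflection in the frequency variable, the Fourier transform of $h_y$.

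Next I would apply Proposition \ref{p3}, which gives $f(x+iy) = 0$ for every $x \in \mathbb{R}$ and every $y > 0$. Hence for each $y > 0$ the Fourier transform $\widehat{h_y}$ vanishes identically on $\mathbb{R}$. By Plancherel's theorem the Fourier transform is an isometric isomorphism of $L^2(\mathbb{R})$, and in particular injective, so $h_y = 0$ almost everywhere on $\mathbb{R}$. Since $e^{-2\pi y s} > 0$ pointwise, this immediately forces $\phi(s) = 0$ for almost every $s \in (0,\infty)$.

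Finally I would upgrade the almost-everywhere vanishing to pointwise vanishing by noting that $\phi$ is continuous on $(0,\infty)$: indeed, for fixed $f \in C_0^\infty(\omega_1)$ and fixed $x \in \omega_2$, the map $t \mapsto (e^{-t\mathcal{L}_{g_j}}f)(x)$ is smooth on $(0,\infty)$ (the two heat semigroups are smoothing, and the kernel bound \eqref{2.2} together with \eqref{3.4}--\eqref{3.5} legitimizes differentiation under the integral), so the substitution $t = 1/s$ yields a continuous function of $s \in (0,\infty)$, which after dividing by $s^{1/2}$ remains continuous. A continuous function vanishing a.e. is identically zero, hence $\phi \equiv 0$ on $(0,\infty)$. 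I do not expect a genuine obstacle here; the only point requiring care is the set-up of the correct $L^2$-framework so that the Plancherel/Fourier-inversion step applies cleanly, and the routine justification that $\phi$ is continuous so that the a.e.\ conclusion becomes pointwise.
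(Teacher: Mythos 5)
Your proof is correct and reaches the same conclusion, but by a genuinely different (more elementary) route than the paper. The paper invokes the Paley--Wiener theorem as a black box: it places $f$ in the Hardy space $H^2(\mathbb{C}^+)$ and uses the Paley--Wiener isometry $\|f\|_{H^2(\mathbb{C}^+)} = \|\phi\|_{L^2(0,\infty)}$, from which $f \equiv 0$ immediately forces $\|\phi\|_{L^2} = 0$. You instead unpack the content of that theorem directly: fix a horizontal line $\operatorname{Im} z = y > 0$, observe $f(x+iy) = \widehat{h_y}(-x)$ with $h_y(s) = \phi(s)e^{-2\pi ys}\mathbf{1}_{s>0} \in L^1 \cap L^2(\mathbb{R})$, invoke injectivity of the Fourier transform on $L^2$ (Plancherel), and strip off the positive weight $e^{-2\pi ys}$. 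The only small point worth making explicit is the $L^1$ membership of $h_y$ (which follows from Cauchy--Schwarz, since $\int_0^\infty e^{-4\pi ys}\,ds < \infty$), so that the pointwise integral formula for $\widehat{h_y}$ is legitimate before applying Plancherel. Your closing continuity argument (to upgrade a.e.\ vanishing to pointwise vanishing) is a refinement the paper leaves implicit but which is indeed needed to assert $\phi(s) = 0$ for every $s>0$; your justification via smoothness of $t \mapsto e^{-t\mathcal{L}_{g_j}}f(x)$ on $(0,\infty)$ is correct. In short, the paper's proof is shorter by citing Paley--Wiener; yours is self-contained and avoids introducing Hardy spaces at all.
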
 
	\begin{proof}
		Let's recall \eqref{f-phi}
		\begin{equation}\label{f-1} f(z) = \int_0^\infty \phi(s)\, e^{2\pi i z\,s},\end{equation}
		where $\phi\in L^2(0, \infty)$  thanks to \eqref{phiL2}.   According to Proposition \ref{p2}, $f$ is a holomorphic function over $\mathbb{C}^{+}$. 
		
		\medskip
		At this point, we define the so called Hardy space $H^2(\mathbb{C}^+)$, that is the space of holomorphic functions in $\mathbb{C}^{+}$, such that \begin{equation}\label{hardy}\|f\|^2_{H^2(\mathbb{C}^+)}:= \sup_{y>0} \int_{-\infty}^{\infty} |f(x+iy)|^2dx\,<\infty.
		\end{equation}
		By the classical Paley–Wiener theorem \cite{PW87}, given any $\phi\in L^2(0,\infty)$, defining $f$ as in \eqref{f-1}, we have that $f\in H^2(\mathbb{C}^{+})$, and \begin{equation}\label{norm} \|f\|_{H^2(\mathbb{C}^+)}=\|\phi\|_{L^2(0,\infty)}. 
		\end{equation}
		As we have already demonstrated that \(f(z) = 0 \) for all \(z \in \mathbb{C}^+ \) (cf. Proposition \ref{p3}), it follows from \eqref{norm} that \(\phi(s) \equiv 0 \) for all \(s > 0 \).  This concludes the proof. 
	\end{proof}
	
	\noindent
	This leads to our final finding:
	\begin{proposition}\label{p5}
		$$\mathcal{K}_{\Delta_{g_1}}(t,x,y)=\mathcal{K}_{\Delta_{g_2}}(t,x,y)\quad \text{$\forall t>0$  and $x,y \in O$},$$
		where $\mathcal{K}_{\Delta_{g_i}}(t,x,y)$ is the heat kernel of the heat semigroup $e^{t\Delta_{g_i}}$ in $(0,\infty)\times (M_i, g_i)$ for $i=1,2$ respectively.  
	\end{proposition}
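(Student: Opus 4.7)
Proposition \ref{p4} tells us that $\phi\equiv 0$ on $(0,\infty)$, and substituting $t=1/s$ into the definition \eqref{phi} turns this into
\[ e^{-t\mathcal{L}_{g_1}} f(x) = e^{-t\mathcal{L}_{g_2}} f(x), \quad t>0,\ x\in \omega_2,\ f\in C_0^\infty(\omega_1). \]
Writing each side as an integral against the corresponding heat kernel via \eqref{hk1}, and recalling $g_1=g_2=g$ on $\omega_1$ so that $dV_{g_1}=dV_{g_2}=dV_g$ there, the arbitrariness of $f\in C_0^\infty(\omega_1)$ yields
\[ \mathcal{K}_{\mathcal{L}_{g_1}}(t,x,y) = \mathcal{K}_{\mathcal{L}_{g_2}}(t,x,y), \quad t>0,\ x\in \omega_2,\ y\in \omega_1. \]
The factorization \eqref{2.1} then cancels the common $e^{-m^2 t}$ and promotes this to the bi-Laplacian kernel identity $\mathcal{K}_{\Delta_{g_1}^2}(t,x,y)=\mathcal{K}_{\Delta_{g_2}^2}(t,x,y)$ on the same set.

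To transfer the information from the bi-Laplacian heat semigroup to the Laplace--Beltrami heat semigroup, the plan is to invoke Bochner's subordination principle for the $\tfrac{1}{2}$-stable subordinator: for any non-negative self-adjoint operator $A$ on $L^2(M)$,
\[ e^{-rA^{1/2}} = \frac{r}{2\sqrt{\pi}}\int_0^\infty s^{-3/2}\, e^{-r^2/(4s)}\, e^{-sA}\, ds, \quad r>0. \]
Taking $A=\Delta_{g_i}^2$, so $A^{1/2}=-\Delta_{g_i}$, and evaluating the resulting identity on $f\in C_0^\infty(\omega_1)$ at $x\in \omega_2$, the bi-Laplacian kernel identity just established forces the two integrands (for $i=1,2$) to coincide, so $e^{r\Delta_{g_1}}f(x)=e^{r\Delta_{g_2}}f(x)$ for all $r>0$. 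A final test-function argument then gives
\[ \mathcal{K}_{\Delta_{g_1}}(r,x,y)=\mathcal{K}_{\Delta_{g_2}}(r,x,y), \quad r>0,\ x\in \omega_2,\ y\in \omega_1. \]

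To extend this equality from $\omega_2\times \omega_1$ to all of $\mathcal{O}\times \mathcal{O}$, I invoke the heat-kernel symmetry $\mathcal{K}(r,x,y)=\mathcal{K}(r,y,x)$ (which immediately gives equality also on $\omega_1\times \omega_2$) together with the freedom to choose any pair of open sets $\omega_1,\omega_2\subset\subset \mathcal{O}$ with $\overline{\omega_1}\cap\overline{\omega_2}=\emptyset$. Such pairs exhaust every off-diagonal point of $\mathcal{O}\times \mathcal{O}$, and the joint smoothness of $\mathcal{K}_{\Delta_{g_i}}$ in $(x,y)$ for $r>0$ then fills in the diagonal by continuity.

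The main obstacle is justifying the pointwise subordination step rigorously. For $x\in \omega_2$, $y\in \omega_1$ with $d_g(\overline{\omega_1},\overline{\omega_2})>0$, the estimate \eqref{2.2} gives $|\mathcal{K}_{\Delta_g^2}(s,x,y)|\lesssim s^{-n/4}e^{-c/s^{1/3}}$ near $s=0$ and decays quickly at infinity, which, combined with the subordinator density $s^{-3/2}e^{-r^2/(4s)}$, guarantees absolute integrability in $s$. This is precisely what is needed to swap the $s$-integral with the $y$-integration against $f$ via Fubini and to make pointwise evaluation of Bochner's formula legitimate. Once this convergence is in hand, the remaining steps are essentially bookkeeping.
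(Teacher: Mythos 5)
Your argument is correct and reaches the same conclusion, but via a genuinely different mechanism than the paper. Where you invoke the Bochner $\tfrac12$-stable subordination identity
\[
e^{-rA^{1/2}}=\frac{r}{2\sqrt{\pi}}\int_0^\infty s^{-3/2}e^{-r^2/(4s)}e^{-sA}\,ds
\]
with $A=\Delta_{g_i}^2$ (so that $A^{1/2}=-\Delta_{g_i}$, the unique non-negative square root) to pass \emph{in one step} from $e^{-t\Delta_{g_1}^2}f(x)=e^{-t\Delta_{g_2}^2}f(x)$ to $e^{r\Delta_{g_1}}f(x)=e^{r\Delta_{g_2}}f(x)$, the paper instead performs a longer chain: it applies Kannai's transmutation formula to pass from the bi-Laplacian heat semigroup to the Schr\"odinger group $e^{i\sigma\Delta_{g_i}}$, then uses an inverse Laplace transform, odd/even extension, and an approximation argument in the functional calculus to obtain equality of the wave propagators $\tfrac{\sin(t\sqrt{-\Delta_{g_i}})}{\sqrt{-\Delta_{g_i}}}$, and finally applies Kannai's formula a second time to land on the Laplace heat semigroup. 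Your subordination route is shorter and avoids the intermediate wave-operator manipulations; the paper's route is perhaps more in line with the wave-equation reduction used elsewhere in the nonlocal Calder\'on literature. For the extension to all of $\mathcal{O}\times\mathcal{O}$, you use the heat-kernel symmetry $\mathcal{K}(r,x,y)=\mathcal{K}(r,y,x)$, the freedom to move the pair $(\omega_1,\omega_2)$ (so re-running Propositions~\ref{p1}--\ref{p4} for each pair, as they only required $\omega_1,\omega_2\subset\subset\mathcal{O}$ disjoint), and smoothness to fill in the diagonal; the paper instead invokes unique continuation for the heat equation $(\partial_t-\Delta_g)$ on $(0,\infty)\times\mathcal{O}$ and then varies $\omega_1$. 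Both are valid. One small caveat: your integrability justification cites \eqref{2.2}, which is the bound for $\mathcal{K}_{\mathcal{L}_g}$ (carrying the extra $e^{-m^2t}$ factor); the bound you actually need is the $m$-free estimate of Theorem~\ref{th2.1} on $\mathcal{K}_{\Delta_g^2}$, but that suffices since the subordinator density contributes $s^{-3/2}$ integrability at $s\to\infty$ and $e^{-r^2/(4s)}$ decay at $s\to 0^+$.
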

	
	\begin{proof}
		Using Propositions \ref{p1} - \ref{p4}, we have that, for all $f\in C^\infty_c(\omega_1)$,
		\[
		e^{-t\mathcal{L}_{g_1} }f(x) = e^{-t\mathcal{L}_{g_2} }f(x), \quad\forall \,\, t>0, \mbox{ and }x\in\omega_2.
		\]
		This implies, for all $f\in C^\infty_c(\omega_1)$,
		\begin{equation}\label{tran1}
			e^{-t\Delta^2_{g_1}}f(x)=e^{-t\Delta^2_{g_2}}f(x), \quad\forall \,\, t>0, \mbox{ and }x\in\omega_2. 
		\end{equation}
		
		\medskip
		Next, we use the transmutation formula of Kannai (cf. \cite{Kann77}), which says, for all $v \in C^\infty(M_i)$,
		\begin{equation}
			e^{-t\Delta^2_{g_i}} v = \frac{1}{4\sqrt{\pi}t^{\frac{1}{3}}} \int_0^\infty e^{-\frac{\tau}{4t}} \frac{\sin(\sqrt{\tau} \Delta_{g_i})}{\Delta_{g_i}} v\, d\tau,\quad \forall t>0,
		\end{equation}
		where $i = 1,2$.
		
		\medskip
		Let $f \in C_0^\infty(\omega_1)$. Then from \eqref{tran1} and \eqref{tran1}, we have
		\begin{equation}
			\int_0^\infty e^{-\frac{\tau}{4t}} \left(\frac{\sin(\sqrt{\tau} \Delta_{g_1})}{\Delta_{g_1}} f\right)(x)\, d\tau = \int_0^\infty e^{-\frac{\tau}{4t}} \left(\frac{\sin(\sqrt{\tau} \Delta_{g_2})}{\Delta_{g_2}} f\right)(x)\, d\tau,
		\end{equation}
		for all $t > 0$, and $x \in \omega_2$. 
		
		\medskip
		Then by applying the inverse Laplace transform, we obtain
		\begin{equation}
			\left(\frac{\sin(\sigma \Delta_{g_1})}{\Delta_{g_1}} f\right)(x) = \left(\frac{\sin(\sigma \Delta_{g_2})}{\Delta_{g_2}} f\right)(x), \quad \forall \sigma > 0,\,\,\mbox{and } x \in \omega_2.
			\label{eq:sin_identity}
		\end{equation}
		
		Since on $\mathcal{O}\supset\omega_1, \omega_2$, we have $g_1 = g_2 = g$, it follows that
		
		\begin{equation}
			(\sin(\sigma \Delta_{g_1}) f)(x) = (\sin(\sigma \Delta_{g_2}) f)(x), \quad \forall \sigma > 0,\,\,\mbox{and } x \in \omega_2.
			\label{eq:sin_equality}
		\end{equation}
		Differentiating \eqref{eq:sin_identity} with respect to $\sigma$ yields
		\begin{equation}
			(\cos(\sigma \Delta_{g_1}) f)(x) = (\cos(\sigma \Delta_{g_2}) f)(x), \quad \forall \sigma > 0,\,\,\mbox{and } x \in \omega_2.
			\label{eq:cos_equality}
		\end{equation}
		By extending the sine function oddly and the cosine function evenly, we find that for all $\sigma \in \mathbb{R}$ and $x \in \omega_2$,
		\begin{align}
			(\sin(\sigma \Delta_{g_1}) f)(x) = (\sin(\sigma \Delta_{g_2}) f)(x),  \quad\mbox{and}\quad
			(\cos(\sigma \Delta_{g_1}) f)(x) = (\cos(\sigma \Delta_{g_2}) f)(x);
		\end{align}
		which gives
		\begin{equation}
			(e^{i\sigma \Delta_{g_1}} f)(x) = (e^{i\sigma \Delta_{g_2}} f)(x), \quad \forall \sigma \in \mathbb{R},\,\,\mbox{and } x \in \omega_2.
		\end{equation}
		
		\medskip
		Now for any $\phi \in \mathcal{S}(\mathbb{R})$, we can express
		\begin{equation*}
			(\phi(\Delta_{g_i}) f)(x) = \frac{1}{2\pi} \int_{\mathbb{R}} (e^{i\sigma \Delta_{g_i}} f)(x) \widehat{\phi}(\sigma)\, d\sigma.
		\end{equation*}
		Let us take $\psi \in C_0(\mathbb{R})$. then there exists a sequence ${\phi_j} \subset \mathcal{S}(\mathbb{R})$ such that $\phi_j \to \psi$ uniformly. In particular, by choosing $\psi(\sigma) = \frac{\sin{t\sqrt{|\sigma|}}}{\sqrt{|\sigma|}}$, we obtain
		
		\begin{equation}
			\left( \frac{\sin(t\sqrt{-\Delta_{g_1}})}{\sqrt{-\Delta_{g_1}}} \right) f(x) = \left( \frac{\sin(t\sqrt{-\Delta_{g_2}})}{\sqrt{-\Delta_{g_2}}} \right) f(x),\quad\forall t>0,\,\,\mbox{and }x\in\omega_2.
			\label{eq:wave_identity}
		\end{equation}
		
		\medskip
		Next, we seek Kannai's transmutation formula (see \cite{Kann77}) again, to write for every $v \in C^\infty(M_i)$
		\begin{equation}\label{tran2}
			e^{t\Delta_{g_i}} v = \frac{1}{4\sqrt{\pi}t^{\frac{1}{3}}} \int_0^\infty e^{-\frac{\tau}{4t}} \frac{\sin(\sqrt{\tau} \sqrt{-\Delta_{g_i}})}{\sqrt{-\Delta_{g_i}}} v\, d\tau, \quad\forall t>0,
		\end{equation}
		where $i = 1,2$.

		\medskip
		Thus using \eqref{eq:wave_identity} and \eqref{tran2}, we can deduce that for all $f\in C^\infty_c(\omega_1)$,
		\begin{equation*}
			e^{t\Delta_{g_1}} f(x)= e^{t\Delta_{g_2}} f(x), \quad\forall t>0,\,\,\mbox{and }x\in\omega_2.
		\end{equation*}
		Since the function \( (e^{t\Delta_{g_1}} - e^{t\Delta_{g_2}}) f \) restricted to \( (0,\infty) \times \overline{\omega_2} \) is smooth,
		\[
		\mbox{i.e.}\,\,    (e^{t\Delta_{g_1}} - e^{t\Delta_{g_2}}) f \in C^\infty((0,\infty) \times \overline{\omega_2}).
		\]
		And it satisfies the following heat equation
		\[
		(\partial_t - \Delta_g) (e^{t\Delta_{g_1}} - e^{t\Delta_{g_2}}) f = 0 \quad \text{in }(0,\infty)\times \mathcal{O},
		\]
		thanks to $g_1=g_2=g$ in $\mathcal{O}$.
		
		\medskip    
		Therefore, by the unique continuation property of the heat equation (cf. \cite{Lin90}), we conclude that, for all $f\in C^\infty_c(\omega_1)$,
		\[(e^{t\Delta_{g_1}} - e^{t\Delta_{g_2}}) f = 0 \quad \text{in }(0,\infty)\times \mathcal{O}.\]
		Since \( \omega_1\subset\mathcal{O} \) is arbitrary, it follows that,
		\[    e^{t\Delta_{g_1}} f = e^{t\Delta_{g_2}} f \quad \text{in }(0,\infty)\times \mathcal{O},
		\]
		for all $f\in C^\infty_c(\mathcal{O})$.
		
		\medskip
		This finally gives the equality of heat kernels:
		\[ \mathcal{K}_{\Delta_{g_1}}(t,x,y) = \mathcal{K}_{\Delta_{g_2}}(t,x,y), \quad \text{$\forall t>0,$  and $x,y \in O$}.\]
		This completes the proof.
	\end{proof}
	
	\noindent
	Finally, we use the following result from \cite[Theorem 1.5]{FGKU25} to finish the proof of our Theorem \ref{th1.1}.
	\begin{theorem}[\cite{FGKU25}]
		Let $(N_1, g_1)$ and $(N_2, g_2)$ be smooth connected
		complete Riemannian manifolds of dimension $n\geq 2$ without boundary. Let $\mathcal{O}_j \subset N_j$, $j = 1, 2,$ be open nonempty sets. Assume that
		$\mathcal{O}_1 =\mathcal{O}_2 := \mathcal{O}$. Assume furthermore that
		$$\mathcal{K}_{\Delta_{g_1}}(t, x, y) = \mathcal{K}_{\Delta_{g_2}}(t, x, y),\quad\forall t>0,\,\,\mbox{and }x,y\in\mathcal{O}.$$
		Then there exists a diffeomorphism $\varphi : N_1 \mapsto N_2$ such that $\varphi^\ast g_2 =g_1$ on $N_1$.
	\end{theorem}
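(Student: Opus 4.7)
The plan is to reduce the problem to the classical inverse problem of recovering the metric from interior spectral data, and then to invoke a Belishev--Kurylev-type reconstruction adapted to complete manifolds observed from an interior region. The heat-kernel equality on $(0,\infty)\times\mathcal{O}\times\mathcal{O}$ is very strong information: it is essentially equivalent to knowing the interior spectral data of $\Delta_{g_i}$ localized to $\mathcal{O}$.

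First I would use the spectral expansion
\begin{equation*}
\mathcal{K}_{\Delta_{g_i}}(t,x,y) = \sum_{k} e^{-t\lambda_k^{(i)}}\sum_j \phi_{k,j}^{(i)}(x)\,\phi_{k,j}^{(i)}(y),\qquad (x,y)\in\mathcal{O}\times\mathcal{O},
\end{equation*}
and appeal to the linear independence of distinct real exponentials $e^{-t\lambda}$ in $t$ to conclude, level by level, that the two manifolds share the same eigenvalues with multiplicities, and that the spectral projection kernels
\begin{equation*}
S_\lambda^{(i)}(x,y):=\sum_{\lambda_k^{(i)}=\lambda}\sum_j \phi_{k,j}^{(i)}(x)\,\phi_{k,j}^{(i)}(y)
\end{equation*}
coincide on $\mathcal{O}\times\mathcal{O}$ for every $\lambda$. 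Equivalently, the spectral projections $\pi_k^{(i)}$ restricted and co-restricted to $\mathcal{O}$ agree as operators $L^2(\mathcal{O})\to L^2(\mathcal{O})$, which recovers the interior spectral data $\{\lambda_k,\phi_{k,j}|_\mathcal{O}\}$ up to orthogonal rotations inside each eigenspace. Non-triviality of these projections on $\mathcal{O}\times\mathcal{O}$ is guaranteed by the unique continuation property of the elliptic equation $(-\Delta_{g_i}-\lambda)\phi=0$.

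Next I would convert this into wave data via Kannai's transmutation, exactly in the spirit of Proposition~\ref{p5}. Because the spectral expansion of $\sin(\sigma\sqrt{-\Delta_{g_i}})/\sqrt{-\Delta_{g_i}}$ depends only on the quantities just identified, for every $f\in C_0^\infty(\mathcal{O})$ the wave solutions
\begin{equation*}
u^{(i)}(\sigma,x):=\left(\frac{\sin(\sigma\sqrt{-\Delta_{g_i}})}{\sqrt{-\Delta_{g_i}}}\,f\right)(x),\qquad (\sigma,x)\in\mathbb{R}\times\mathcal{O},
\end{equation*}
coincide for $i=1,2$. This yields equality of the local source-to-solution maps for the wave equation associated with the observation set $\mathcal{O}$, which is the natural input of the interior control method.

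The decisive and most delicate step is the geometric reconstruction from this wave data, which I would carry out by the Belishev--Kurylev approach adapted to interior observations. The key technical tool is Tataru's sharp unique continuation theorem for the wave equation, which guarantees that the space of waves generated at time $T$ by sources supported in $(0,T)\times\mathcal{O}$ is dense in $L^2$ of the domain of influence $\{x\in N_i:d_{g_i}(x,\mathcal{O})<T\}$. Combined with a Blagovestchenskii-type identity, this density makes it possible to compute inner products of wave solutions directly from the source-to-solution map on $\mathcal{O}$; one then reconstructs, for every $x\in N_i$, the travel-time function $\tau_x:\mathcal{O}\to[0,\infty)$, $\tau_x(y)=d_{g_i}(x,y)$, and embeds $N_i$ into $C(\mathcal{O})$ via $x\mapsto\tau_x$. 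Equality of the wave data forces the two embedded images to coincide, producing a canonical diffeomorphism $\varphi:N_1\to N_2$ with $\varphi^\ast g_2=g_1$. The main obstacle, and the part I would cite rather than reprove, is precisely this last step: the clean passage from equal local wave data to globally isometric manifolds rests on the full machinery of the boundary control method and was carried out in the interior-observation setting in \cite{HLOS18}, which is why it is invoked here as a black box.
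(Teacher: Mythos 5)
The paper does not give a proof of this statement: it is quoted verbatim as \cite[Theorem~1.5]{FGKU25} and applied as a black box after Proposition~\ref{p5}. The only indication in the paper of how it is established is the remark in the introduction that the result ``is based on reducing the problem into the hyperbolic inverse problem, and thanks to the work of \cite{HLOS18}.'' Your proposal --- convert heat data to wave (source-to-solution) data, then invoke the boundary control machinery (Tataru unique continuation, Blagovestchenskii identity, travel-time embedding) from \cite{HLOS18} --- is precisely this strategy, so at the level of the available comparison it is the same approach.

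That said, your first step has a genuine gap for the theorem as stated. You expand
$\mathcal{K}_{\Delta_{g_i}}(t,x,y) = \sum_{k} e^{-t\lambda_k^{(i)}}\sum_j \phi_{k,j}^{(i)}(x)\,\phi_{k,j}^{(i)}(y)$
and argue level by level via linear independence of distinct exponentials. This presupposes that $-\Delta_{g_i}$ has purely discrete spectrum, which holds for closed manifolds but not for general \emph{complete} noncompact manifolds without boundary --- exactly the setting of the cited theorem, where the spectrum may be continuous and the heat kernel is given by an integral against a spectral measure, not a sum over eigenvalues. The remark you make about unique continuation preventing spectral projections from vanishing on $\mathcal{O}\times\mathcal{O}$ likewise uses discreteness. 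The fix is to skip the discrete spectral decomposition entirely and pass from heat to wave data via Kannai's transmutation formula, which is a functional-calculus identity valid for any nonnegative self-adjoint operator (and is in fact what Proposition~\ref{p5} of this paper uses); you mention this route but currently justify it through the problematic spectral expansion, which makes the justification circular. Once the source-to-solution wave data is obtained directly from the heat kernel equality, the deduction of the isometry from \cite{HLOS18} is sound.

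A smaller caveat: even in the compact case, equality of the heat kernels on $\mathcal{O}\times\mathcal{O}$ gives equality of the \emph{restricted} spectral projection kernels $S_\lambda^{(1)}=S_\lambda^{(2)}$ on $\mathcal{O}\times\mathcal{O}$, not a bijection between eigenfunction bases; you correctly say ``up to orthogonal rotations inside each eigenspace,'' but it is worth making explicit that what passes to the wave propagator is only the restricted projection kernel, which is all the source-to-solution map on $\mathcal{O}$ needs.

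Finally, bear in mind that Proposition~\ref{p5} of this paper already performs a Kannai-based heat-to-wave-to-heat reduction in order to \emph{reach} the hypothesis of the cited theorem; if you were to write the full argument out, you would want to avoid redundantly re-deriving the wave data equality that, in the structure of this paper, is essentially already in hand when the theorem is invoked.
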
 
	\medskip
	This concludes the discussion on the proof of Theorem \ref{th1.1}.
	\qed

	\bigskip
	
	\subsection*{Acknowledgment:}

	I am grateful to Prof. Tuhin Ghosh (Harish-Chandra Research Institute, Prayagraj, India)  for his guidance and advice,  all of which helped me to accomplish this project. I'd also want to thank the Department of Atomic Energy, Government of India, for supporting this research through its Fellowship.

	{\small 
		\bibliographystyle{acm}
		\bibliography{Master_bibfile}}
\end{document}